\DeclareMathOperator{\rk}{rk}
\newcommand{\KL}{Kazhdan-Lusztig\ }
\newtheorem{thm}{Theorem}[section]
\newtheorem{lem}[thm]{Lemma}
\newtheorem{prop}[thm]{Proposition}
\newtheorem{conj}[thm]{Conjecture}
\numberwithin{equation}{section}
\begin{document}
\begin{center}
{\large \bf  The Kazhdan-Lusztig polynomials of uniform matroids}
\end{center}

\begin{center}
Alice L.L. Gao$^{1}$, Linyuan Lu$^2$, Matthew H.Y. Xie$^3$ \\
Arthur L.B. Yang$^{4}$, Philip B. Zhang$^{5}$\\[6pt]

$^{1,3,4}$Center for Combinatorics, LPMC\\
Nankai University, Tianjin 300071, P. R. China\\[6pt]
 
$^{1}$ Department of Applied Mathematics,\\
Northwestern Polytechnical University, Xi'an, Shaanxi 710072, P.R. China\\[6pt]

$^{2}$Department of Mathematics\\
University of South Carolina, Columbia, SC 29208, USA\\[6pt]

$^{5}$ College of Mathematical Science\\
Tianjin Normal University, Tianjin 300387, P. R. China\\[6pt]
Email: $^{1}${\tt llgao@nwpu.edu.cn},
       $^{2}${\tt lu@math.sc.edu}
	   $^{3}${\tt xie@mail.nankai.edu.cn},
       $^{4}${\tt yang@nankai.edu.cn}
       $^{5}${\tt zhang@tjnu.edu.cn}
\end{center}

\begin{abstract}
  The Kazhdan-Lusztig polynomial of a matroid was introduced by Elias, Proudfoot, and Wakefield [{\it Adv. Math. 2016}]. Let $U_{m,d}$ denote the uniform matroid of rank $d$ on a set of $m+d$ elements.
  Gedeon, Proudfoot, and Young [{\it J. Combin. Theory Ser. A, 2017}] pointed out that they can derive
 an explicit formula of the Kazhdan-Lusztig polynomials of $U_{m,d}$ using equivariant Kazhdan-Lusztig polynomials.
In this paper we give two alternative explicit formulas, which allow us to prove the real-rootedness of
the Kazhdan-Lusztig polynomials of $U_{m,d}$ for $2\leq m\leq 15$ and all $d$'s. The case $m=1$ was
previously proved by Gedeon, Proudfoot, and Young [{\it S\'{e}m. Lothar. Combin. 2017}].
We further determine the $Z$-polynomials of all $U_{m,d}$'s
and prove the real-rootedness of the $Z$-polynomials of $U_{m,d}$ for $2\leq m\leq 15$ and all $d$'s.
Our formula also enables us to give an alternative proof of Gedeon, Proudfoot, and  Young's formula for the Kazhdan-Lusztig polynomials of $U_{m,d}$'s without using the equivariant Kazhdan-Lusztig polynomials.
\end{abstract}

\emph{AMS Classification 2010:} 05A15, 26C10, 33F10

\emph{Keywords:} Kazhdan-Lusztig polynomial,  uniform matroid, $Z$-polynomial, Zeilberger's algorithm, real-rootedness

\section{Introduction}


 The goal of this paper is threefold: giving a nice formula for computing the Kazhdan-Lusztig polynomials of arbitrary uniform matroids; determining the $Z$-polynomials of arbitrary uniform matroids;
 and proving the real-rootedness of  both the Kazhdan-Lusztig polynomials and the $Z$-polynomials
 for some special uniform matroids.
 Before stating our main results, let us first give an overview of some background.

Recently, Elias, Proudfoot, and Wakefield  \cite{elias2016kazhdan} introduced the notion of the Kazhdan-Lusztig polynomial of a matroid. Given a loopless matroid $M$, let $L(M)$ denote the lattice of flats of $M$, let $\chi_M(t)$ denote its characteristic polynomial, and let $\rk M$ denote the rank of $M$.
They proved that there is a unique way to associate to each $M$ a polynomial $P_M(t) \in \mathbb{Z}[t]$ satisfying the following properties:
\begin{itemize}
\item If $\rk M=0$, then $P_M(t)=1$.

\item If $\rk M>0$, then $\deg P_M(t) < \frac 12 \rk M$.

\item For every $M$, $t^{\rk M}P_M(t^{-1}) = \displaystyle\sum_{F \in L(M)} \chi_{M_F}(t) P_{M^F}(t)$,
\end{itemize}
where the symbol $M^F$ represents the  contraction of $M$ at $F$, and $M_F$ represents the  localization  of $M$ at $F$.

The \KL polynomials for matroids turn out to possess many interesting properties. For example, Elias,  Proudfoot and Wakefield
\cite{elias2016kazhdan} proposed a conjecture which states that the \KL polynomial
of an arbitrary matroid has only non-negative coefficients, and they also proved this conjecture for any representable matroid.
Another interesting conjecture is due to Gedeon, Proudfoot, and  Young \cite{gedeon2016survey}, which states that the \KL polynomial of a matroid has only negative zeros.

To study the properties of the \KL polynomial of a matroid,
it is desirable to give an explicit formula to compute its coefficients. However, it seems hopeless to do this for arbitrary matroid. Recently, much work has been focused on determining the  \KL polynomials for specific families of matroids. For instance, Gedeon \cite{gedeon2016thagomizer} determined the \KL polynomials for thagomizer matroids, Gedeon, Proudfoot, and Young \cite{gedeon2016survey} determined the \KL polynomials for complete bipartite graphs with one part having exactly two vertices, and Lu-Xie-Yang \cite{lu2018kazhdan} determined the \KL polynomials for fan matroids, wheel matroids, and whirl matroids.
The \KL polynomials for braid matroids have been studied in \cite{elias2016kazhdan, gedeon2016survey, karn2018stirling}.

This paper focuses on the \KL polynomials of uniform matroids. Let $U_{m,d}$ denote the uniform matroid of rank $d$ on a set of $m+d$ elements. Throughout this paper, we always assume that $m$ and $d$ are positive integers. Elias, Proudfoot, and Wakefield \cite{elias2016kazhdan} obtained  a recursive relation among the coefficients of \KL polynomials  $P_{U_{m,d}}(t)$. Suppose that
\begin{align*}
 P_{U_{m,d}}(t)=\sum_{i=0}^{\lfloor \frac{d-1}{2}\rfloor}{c_{m,d}^i t^i}.
\end{align*}
Elias, Proudfoot, and Wakefield derived the following result.

\begin{prop}[{\cite[Proposition 2.19 ]{elias2016kazhdan}}]\label{prop-klrec}
For any $m$, $d$, and $0\leq i\leq \lfloor\frac{d-1}{2}\rfloor$, we have
\begin{align}
c_{m,d}^i = (-1)^i\binom{m+d}{i} +
\sum_{j=0}^{i-1}\sum_{k=2j+1}^{i+j}(-1)^{i+j+k}\binom{m+d}{m+k,i+j-k,d-i-j} c^j_{m,k}.\label{uklrec}
\end{align}
\end{prop}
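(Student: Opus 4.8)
The plan is to work directly from the three defining properties of the matroid \KL polynomial, specializing the fundamental recursion (the third bullet) to $M=U_{m,d}$. The first step is to describe the lattice of flats $L(U_{m,d})$ explicitly: a subset of the ground set is a flat precisely when it has at most $d-1$ elements or is the whole ground set, so the proper flats of rank $\ell$ are exactly the $\ell$-element subsets, of which there are $\binom{m+d}{\ell}$, together with a unique top flat $\hat 1$ of rank $d$. For a proper flat $F$ of rank $\ell$, the localization $M_F$ is the free (Boolean) matroid on $\ell$ elements, whose lower interval in the lattice of flats is Boolean, so its characteristic polynomial is $\chi_{M_F}(t)=(t-1)^{\ell}$; meanwhile the contraction $M^F$ is again uniform, namely $M^F\cong U_{m,\,d-\ell}$. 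For the top flat, $M^{\hat 1}$ has rank $0$ so $P_{M^{\hat 1}}(t)=1$, while $M_{\hat 1}=U_{m,d}$ contributes its own characteristic polynomial.

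Next I would record the characteristic polynomial of the uniform matroid. Since the Möbius function of the lattice of flats satisfies $\mu(\hat 0,F)=(-1)^{\ell}$ for every flat of rank $\ell<d$, one obtains
$$\chi_{U_{m,d}}(t) = \sum_{\ell=0}^{d-1}(-1)^{\ell}\binom{m+d}{\ell}t^{\,d-\ell} + \mu(\hat 0,\hat 1),$$
where the constant $\mu(\hat 0,\hat 1)$ affects only the $t^0$ coefficient and so is irrelevant in what follows. Substituting everything into the fundamental recursion yields
$$t^{d}P_{U_{m,d}}(t^{-1}) = P_{U_{m,d}}(t) + \sum_{\ell=1}^{d-1}\binom{m+d}{\ell}(t-1)^{\ell}P_{U_{m,d-\ell}}(t) + \chi_{U_{m,d}}(t),$$
the isolated term $P_{U_{m,d}}(t)$ coming from the bottom flat $\hat 0$.

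The heart of the argument is a coefficient extraction. Because the left-hand side equals $\sum_i c_{m,d}^i\,t^{\,d-i}$, the coefficient of $t^{\,d-i}$ on the left is exactly $c_{m,d}^i$. On the right I would extract the same coefficient term by term: the degree bound $\deg P_{U_{m,d}}<\tfrac12 d$ forces $d-i>\lfloor\frac{d-1}{2}\rfloor$ throughout the relevant range, so the $P_{U_{m,d}}(t)$ term contributes nothing; the characteristic polynomial contributes precisely $(-1)^i\binom{m+d}{i}$, which is the first term of the statement; and each summand $\binom{m+d}{\ell}(t-1)^{\ell}P_{U_{m,d-\ell}}(t)$ contributes, after writing $P_{U_{m,d-\ell}}(t)=\sum_j c^j_{m,d-\ell}t^j$ and expanding $(t-1)^{\ell}$, the terms with $a+j=d-i$, where $a$ is the exponent supplied by $(t-1)^{\ell}$. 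Re-indexing by $k=d-\ell$ replaces $c^j_{m,d-\ell}$ with $c^j_{m,k}$, collapses the product $\binom{m+d}{d-k}\binom{d-k}{d-i-j}$ into the trinomial $\binom{m+d}{m+k,\,i+j-k,\,d-i-j}$, and produces the sign $(-1)^{i+j-k}=(-1)^{i+j+k}$.

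Finally I would pin down the summation ranges, which fall out of nonnegativity of the binomials: $\binom{d-k}{d-i-j}\neq 0$ forces $k\le i+j$, the vanishing of $c^j_{m,k}$ above its degree forces $2j+1\le k$, and together these force $j\le i-1$, giving exactly the double sum $\sum_{j=0}^{i-1}\sum_{k=2j+1}^{i+j}$ of the statement. I expect the main obstacle to be purely bookkeeping rather than conceptual: justifying the degree-bound cancellation of the $P_{U_{m,d}}(t)$ term cleanly, and verifying that no boundary contributions are lost when the a priori range $1\le\ell\le d-1$ is narrowed to the effective range in which the trinomial is nonzero.
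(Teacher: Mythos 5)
Your proposal is correct, but there is nothing in this paper to compare it against: Proposition \ref{prop-klrec} is quoted as a black box from Elias--Proudfoot--Wakefield \cite{elias2016kazhdan} (their Proposition 2.19), and this paper never proves it, using it only as the recursion against which formula \eqref{ukl2} is verified. What you have written is essentially a reconstruction of the original EPW derivation: the flats of $U_{m,d}$ are the subsets of size at most $d-1$ together with the whole ground set; the localization at a rank-$\ell$ proper flat is Boolean, so $\chi_{M_F}(t)=(t-1)^{\ell}$; the contraction is $U_{m,d-\ell}$; and the defining recursion specializes to
\begin{align*}
t^{d}P_{U_{m,d}}(t^{-1}) = P_{U_{m,d}}(t) + \sum_{\ell=1}^{d-1}\binom{m+d}{\ell}(t-1)^{\ell}P_{U_{m,d-\ell}}(t) + \chi_{U_{m,d}}(t).
\end{align*}
Your bookkeeping holds up at every point where it could fail: the isolated term $P_{U_{m,d}}(t)$ cannot contribute to the coefficient of $t^{d-i}$ because $d-i\geq d-\lfloor\frac{d-1}{2}\rfloor>\lfloor\frac{d-1}{2}\rfloor\geq \deg P_{U_{m,d}}$; the unknown M\"obius number $\mu(\hat 0,\hat 1)$ sits in degree $0<d-i$ and is indeed harmless; the identity $\binom{m+d}{d-k}\binom{d-k}{d-i-j}=\binom{m+d}{m+k,\,i+j-k,\,d-i-j}$ and the sign $(-1)^{i+j-k}=(-1)^{i+j+k}$ are right; and the support constraints $2j+1\leq k\leq i+j$ (hence $j\leq i-1$ and $i+j\leq 2i-1\leq d-2$, so the effective range sits strictly inside the a priori range $1\leq d-\ell\leq d-1$ and no boundary terms are dropped) give exactly the stated limits of summation. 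The only caveat is attribution: presented as a proof of the proposition, this is a proof of EPW's result from their axioms, not an argument this paper supplies or needs to supply.
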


Using this recursion, they obtained explicit formulas for the first few coefficients of $P_{U_{m,d}}(t)$. Based on this  recursive formula, one can also obtain an explicit formula for $P_{U_{1,d}}(t)$, see Proudfoot, Wakefield, and Young \cite{proudfoot2016intersection}.
By introducing the equivariant Kazhdan-Lusztig polynomial of a matroid, Gedeon, Proudfoot, and  Young \cite{gedeon2017equivariant} pointed out that a general formula for $c_{m,d}^i$ can be obtained.
The following explicit formula can be derived using their approach:
\begin{thm}\label{thm-ukl1}
For any $m$, $d$, and $1\leq i\leq \lfloor\frac{d-1}{2}\rfloor$,
we have
\begin{align}\label{ukl1}
c_{m,d}^i=\sum_{h=1}^{\min(m,d-2i)}&
\frac{(e-i-h+1)(m+d)!}{e(e+1)(i+h)(i+h-1)(e-i)!(h-1)!i!(i-1)!},
\end{align}
where  $e=m+d-i-h$.
\end{thm}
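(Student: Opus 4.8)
The plan is to prove the formula \eqref{ukl1} by induction on $i$, showing that its right-hand side satisfies the recursion \eqref{uklrec} of Proposition~\ref{prop-klrec}. Since \eqref{uklrec} determines $c_{m,d}^i$ uniquely from the values $c_{m,k}^j$ with $j<i$, together with the base value $c_{m,d}^0=1$ (which \eqref{uklrec} itself returns as $(-1)^0\binom{m+d}{0}$), it suffices to check that the closed form satisfies \eqref{uklrec} for every $m$, $d$, and $1\le i\le\lfloor\frac{d-1}{2}\rfloor$. Write $\tilde c_{m,d}^i$ for the right-hand side of \eqref{ukl1}, keeping $m$ fixed and taking $i$ as the induction variable. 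One first records that the summation ranges are compatible: for each pair $(j,k)$ appearing in \eqref{uklrec} with $j\ge 1$ one has $1\le j\le\lfloor\frac{k-1}{2}\rfloor$, so the inductive hypothesis $c_{m,k}^j=\tilde c_{m,k}^j$ is genuinely available.

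Next I would substitute the inductive hypothesis into the double sum of \eqref{uklrec}. The term $j=0$ is treated separately using $c_{m,k}^0=1$, contributing $\sum_{k=1}^{i}(-1)^{i+k}\binom{m+d}{m+k,\,i-k,\,d-i}$, while for $j\ge 1$ each $\tilde c_{m,k}^j$ is itself a sum over an auxiliary index $h$. After unfolding every multinomial coefficient into factorials, the right-hand side becomes a triple sum in $(j,k,h)$, and the goal is to collapse it into the single sum over $h$ that defines $\tilde c_{m,d}^i$. The natural move is to exchange the order of summation and carry out one inner summation in closed form; for instance, summing over $k$ first with $j$ and $h$ held fixed should reduce the $k$-sum to a balanced hypergeometric series evaluable by a Pfaff--Saalsch\"utz or Vandermonde-type identity.

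The remaining single (or double) sum should then be matched against \eqref{ukl1} by creative telescoping. Concretely, I would apply Zeilberger's algorithm to produce a linear recurrence in $d$, with $m$ and $i$ as parameters, that annihilates both $\tilde c_{m,d}^i$ and the reduced form of the right-hand side of \eqref{uklrec}, and then verify that the two sides agree for finitely many initial values of $d$. Care must be taken with the cutoffs $\min(m,d-2i)$ and $2j+1\le k\le i+j$, which should be arranged so that the summands vanish automatically outside the stated ranges; in particular, when $i>\lfloor\frac{d-1}{2}\rfloor$ one has $d-2i\le 0$ and the $h$-sum is empty, matching the degree bound $\deg P_{U_{m,d}}(t)<\frac12 d$.

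The main obstacle will be the combinatorial simplification of the triple sum: choosing the order of summation and the intermediate hypergeometric evaluation so that a closed form actually emerges, and then confirming that the certificates produced by creative telescoping carry no spurious poles at the endpoints $e=0$, $e+1=0$, or $i+h-1=0$ that appear in the denominators of \eqref{ukl1}, together with the boundary contributions of the telescoping. Once the correct inner identity is isolated and the boundary terms are seen to cancel, the outer verification is routine, but pinning down that inner identity is the delicate part of the argument.
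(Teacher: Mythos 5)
Your plan is to substitute the closed form \eqref{ukl1} directly into the recursion \eqref{uklrec} and collapse the resulting triple sum; this is precisely the direct verification that Gedeon, Proudfoot, and Young report having attempted and failed at, and the paper deliberately avoids it. The paper instead verifies the differently structured alternating-sum formula \eqref{ukl2} against \eqref{uklrec} (whose form cooperates with Chu--Vandermonde), derives the positive formula \eqref{ukl3} from \eqref{ukl2} by a Zeilberger-certified recurrence in $m$, and only then obtains \eqref{ukl1} by showing that its $h$-sum $f_{m,d}^i=c_{m,d}^i/\binom{d+m}{i}$ satisfies the same recurrence in $m$ as \eqref{ukl3}, namely $f_{m+1,d}^i-f_{m,d}^i=\frac{1}{d-i}\binom{d-i+m}{i+m+1}\binom{i+m-1}{m}$ with $f_{1,d}^i=\frac{1}{d-i}\binom{d-i}{i+1}$. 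Your proposal never produces the analogous inner identity: you explicitly defer ``pinning down that inner identity'' as the delicate part, so the core of the argument is missing rather than carried out, and it is exactly the step known to be hard.

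There is also a concrete false premise in your plan. You assume the cutoffs can be ``arranged so that the summands vanish automatically outside the stated ranges.'' For \eqref{ukl1} this is false: when $m>d-2i$, the summand with $e=m+d-i-h$ is individually nonzero for $d-2i<h\le m$ (note $e-i=m+d-2i-h\ge 0$ there, so $(e-i)!$ is a genuine positive factorial, while the numerator factor $e-i-h+1$ merely changes sign), and the truncation at $\min(m,d-2i)$ is only harmless because these extra terms cancel \emph{in pairs} under the involution $h\mapsto s-h$, $s=m+d-2i+1$ --- an argument the paper must make explicitly before it can treat the $h$-sum as running up to $m$. In your setting the same issue infects every inner sum $c_{m,k}^j$, whose cutoff $\min(m,k-2j)$ varies with the summation variable $k$; without first extending all inner ranges to $m$ via such a cancellation argument, the proposed interchange of summation and the Vandermonde/Saalsch\"utz evaluation of the $k$-sum do not go through as stated. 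So the gap is twofold: the central hypergeometric collapse is left unexecuted, and the range-handling assumption on which the rest of the plan leans is incorrect.
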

In this paper we obtain two alternative formulas for $c_{m,d}^i$. Here is the first one:
\begin{thm}\label{thm-ukl2}
For any $m$, $d$, and $0\leq i\leq \lfloor\frac{d-1}{2}\rfloor$, we have
\begin{align}\label{ukl2}
c_{m,d}^i=\binom{d+m}{i}\sum_{h=1}^{m}\frac{(-1)^{h+1}h}{d-h-i+m} \binom{d-h-i+m}{d-2i-h}\binom{m+i}{m-h}.
\end{align}
\end{thm}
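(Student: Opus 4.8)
The plan is to treat the recursion \eqref{uklrec} of Proposition~\ref{prop-klrec} as the defining property of the coefficients $c_{m,d}^i$ and to verify that the right-hand side of \eqref{ukl2} satisfies it. Write $b_{m,d}^i$ for the sum on the right of \eqref{ukl2}. Since \eqref{uklrec} expresses $c_{m,d}^i$ solely in terms of the coefficients $c_{m,k}^j$ with $j\le i-1$, it suffices to prove $c_{m,d}^i=b_{m,d}^i$ by induction on $i$, with $m$ and $d$ ranging freely. The parameter $m$ never changes under the recursion, so it may be carried along as a spectator throughout, and the induction is genuinely on the single variable $i$.

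For the base case $i=0$ the recursion gives $c_{m,d}^0=\binom{m+d}{0}=1$, so I must check that $b_{m,d}^0=1$. After using $\binom{d+m-h}{d-h}=\binom{d+m-h}{m}$ and $\binom{m}{m-h}=\binom{m}{h}$, this collapses to a single alternating binomial sum in $h$, which I would dispatch either by a direct telescoping or by Zeilberger's algorithm. It is prudent to verify $i=1$ as well, both because the low-order coefficients are already recorded in \cite{elias2016kazhdan} and because this gives a useful consistency check before the general step.

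The inductive step is the crux. Assuming $c_{m,k}^j=b_{m,k}^j$ for every $j<i$, I would substitute these closed forms into the double sum $\sum_{j=0}^{i-1}\sum_{k=2j+1}^{i+j}$ of \eqref{uklrec}. Because each $b_{m,k}^j$ is itself a sum over an auxiliary index, this produces a triple hypergeometric sum together with the leading term $(-1)^i\binom{m+d}{i}$, and the goal becomes showing that this triple sum equals the single sum $b_{m,d}^i$. My strategy is to interchange the order of summation so that the inner sums are hypergeometric in one variable, evaluate or telescope those inner sums, and then apply Zeilberger's algorithm to what remains. Concretely, I expect to produce a linear recurrence in $d$ (with $m$ and $i$ as parameters) satisfied by the substituted right-hand side of \eqref{uklrec}, produce the analogous recurrence for $b_{m,d}^i$, verify that the two coincide, and finally match finitely many initial values to conclude equality.

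The main obstacle will be organizing and collapsing the nested summations. The $j$-dependent limits $2j+1\le k\le i+j$ in \eqref{uklrec} obstruct a naive interchange of sums, so the reindexing must be carried out carefully, and the multinomial coefficient must be split into ordinary binomials before any telescoping certificate can be recognized. The essential content is therefore a single summation identity that reduces the inner double sum to the summand of $b_{m,d}^i$; this is exactly the step where Zeilberger's algorithm supplies, and rigorously certifies, the needed recurrence, turning an otherwise intractable manipulation into a finite verification. Once \eqref{ukl2} is established in this way, the formula \eqref{ukl1} of Theorem~\ref{thm-ukl1} can be recovered from it by one further hypergeometric simplification, which yields the promised derivation that avoids equivariant Kazhdan-Lusztig polynomials.
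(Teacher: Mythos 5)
Your proposal is correct and takes essentially the same route as the paper's own proof: both verify that the closed form satisfies the recursion \eqref{uklrec} (which determines $c_{m,d}^i$ by induction on $i$, with $m$ a spectator), reduce the resulting triple sum by interchanging the order of summation and evaluating the inner sums via the Chu--Vandermonde identity, and finish with a Zeilberger-certified recurrence plus a check of initial values. The only cosmetic difference is that the paper's final Zeilberger step produces a recurrence in $m$ (comparing $p_{m+1}-p_m$ with $q_{m+1}-q_m$ and verifying the case $m=1$) rather than a recurrence in $d$.
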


Since the right hand side of \eqref{ukl2} is an alternating sum,
it is hard to deduce the positivity of $c_{m,d}^i$. However, based on this formula we can obtain
another formula for $c_{m,d}^i$, which is manifestly positive.

\begin{thm}\label{thm-ukl3}
For any $m$, $d$, and $0\leq i\leq \lfloor\frac{d-1}{2}\rfloor$, we have
\begin{align}\label{ukl3}
c_{m,d}^i={\frac{1}{d-i} \binom{d+m}{i}
{\sum _{h=0}^{m-1}\binom{d-i+h}{h+i+1}\binom{i-1+h}{h} }}.
\end{align}
\end{thm}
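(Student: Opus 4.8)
The plan is to derive \eqref{ukl3} from the already-established formula \eqref{ukl2} by a creative-telescoping (Zeilberger) argument, so that no appeal to the defining recursion \eqref{uklrec} is needed. Since the common factor $\binom{d+m}{i}$ occurs on both sides, it suffices to prove the identity between the two remaining sums. Write
\begin{align*}
A(m)&=\sum_{h=1}^{m}\frac{(-1)^{h+1}h}{d-h-i+m}\binom{d-h-i+m}{d-2i-h}\binom{m+i}{m-h},\\
B(m)&=\frac{1}{d-i}\sum_{h=0}^{m-1}\binom{d-i+h}{h+i+1}\binom{i-1+h}{h},
\end{align*}
where $d$ and $i$ are regarded as fixed parameters and $m$ as the running variable; the goal is $A(m)=B(m)$ for all admissible $m$. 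The advantage of $B(m)$ is that its summand does not involve $m$, so that the increment is a single closed-form term,
\begin{align*}
B(m+1)-B(m)=\frac{1}{d-i}\binom{d-i+m}{m+i+1}\binom{i-1+m}{m}.
\end{align*}

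First I would reduce the whole identity to the increment statement
\begin{align*}
A(m+1)-A(m)=\frac{1}{d-i}\binom{d-i+m}{m+i+1}\binom{i-1+m}{m},
\end{align*}
which, together with the base case, yields $A(m)=B(m)$ by induction on $m$. To prove this increment formula I would apply Zeilberger's algorithm to the summand $F(m,h)$ of $A(m)$, after rewriting it in the factorial form $F(m,h)=(-1)^{h+1}h\,(d-h-i+m-1)!\big/\big[(d-2i-h)!\,(m-h)!\,(i+h)!\big]$, which is manifestly hypergeometric in both $m$ and $h$. The algorithm produces a rational certificate $R(m,h)$ for which $G(m,h)=R(m,h)F(m,h)$ satisfies the telescoped relation $F(m+1,h)-F(m,h)=G(m,h+1)-G(m,h)$. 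Summing this over $h$ collapses the right-hand side to boundary values; combining these with the extra top term $F(m+1,m+1)$ arising from the $m$-dependent upper limit of the sum then reassembles exactly the single term above. Equivalently, one may run Zeilberger to show that $A(m)$ and $B(m)$ satisfy the same homogeneous second-order recurrence in $m$ and then match two initial values.

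For the base case $m=1$ both sides reduce to $\tfrac{1}{d-i}\binom{d-i}{i+1}$, using $\binom{d-i}{d-2i-1}=\binom{d-i}{i+1}$. The manifest positivity of \eqref{ukl3}, and hence of $P_{U_{m,d}}(t)$, is then immediate, since each summand $\binom{d-i+h}{h+i+1}\binom{i-1+h}{h}$ together with the prefactor is nonnegative in the relevant range. The main obstacle is the discovery and verification of the certificate $R(m,h)$: while the algorithm supplies it mechanically, confirming the telescoped relation amounts to a nontrivial polynomial identity in $m$ and $h$ (with $d,i$ as parameters), and one must carefully account for the $m$-dependent summation bounds so that the boundary contributions assemble into precisely the desired increment rather than leaving spurious terms. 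An alternative route, avoiding explicit certificates, would be to recognize $A(m)$ as a terminating hypergeometric series and apply a classical transformation to turn the alternating sum into the positive sum $B(m)$; this would likely be cleaner but requires matching the series to a tabulated transformation.
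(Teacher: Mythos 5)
Your proposal is correct and follows essentially the same route as the paper: both cancel the common factor $\binom{d+m}{i}$, reduce \eqref{ukl3} to the identity between the two sums (your $A(m)$, $B(m)$ are the paper's $f_{m,d}^i$, $g_{m,d}^i$), verify the base case $m=1$ where both sides equal $\frac{1}{d-i}\binom{d-i}{i+1}$, and then prove by induction on $m$ that both increments equal $\frac{1}{d-i}\binom{d-i+m}{m+i+1}\binom{i-1+m}{m}$ --- trivially for $B$ since its summand is free of $m$, and via Zeilberger's algorithm (the paper uses the \textit{fastZeil} package) for $A$. The details you flag as the main obstacle, namely the certificate and the $m$-dependent boundary terms, are exactly what the packaged algorithm handles in the paper's proof.
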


The formula \eqref{ukl3} has some advantages. First, it can be used to
prove \eqref{ukl1} without resorting to the equivariant Kazhdan-Lusztig polynomials of uniform matroids. Secondly, its elegant form allows us to prove the real-rootedness of
the Kazhdan-Lusztig polynomials of some uniform matroids. Gedeon, Proudfoot, and  Young \cite{gedeon2016survey} proved that the polynomial  $P_{U_{1,d}}(t)$ has only negative zeros.
Based on \eqref{ukl3}, we obtain the following result.

\begin{thm}\label{thm-uklroot}
For any $2\leq m\leq 15$ and any $d\geq 1$, the polynomial  $P_{U_{m,d}}(t)$ has only negative zeros.
\end{thm}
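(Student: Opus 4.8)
The plan is to prove real-rootedness by induction on $d$, with $m$ held fixed, and to isolate the role of the hypothesis $m\leq 15$ as a finite collection of sign verifications. The first point to record is that real-rootedness is actually all we need: by Theorem~\ref{thm-ukl3} each coefficient $c_{m,d}^i$ is a product of a binomial coefficient with a sum of products of binomial coefficients, all of which are nonnegative, so $c_{m,d}^i>0$ for every admissible $i$. A real-rooted polynomial with strictly positive coefficients can have no nonnegative zero, hence automatically has only negative zeros. Thus it suffices to show that $P_{U_{m,d}}(t)$ has only real zeros. The second structural observation concerns degrees: since $\deg P_{U_{m,d}}=\lfloor\frac{d-1}{2}\rfloor$, the degree rises by exactly one each time $d$ increases by two, while consecutive polynomials $P_{U_{m,d}}$ and $P_{U_{m,d+1}}$ differ in degree by $0$ or $1$. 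This is precisely the degree pattern under which an interlacing (Sturm-type) induction governed by a short recurrence succeeds.

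The engine of the argument is a recurrence in $d$, obtained by applying Zeilberger's algorithm (creative telescoping) to the summand $c_{m,d}^i\,t^i$ coming from \eqref{ukl3}. Treating $t$ as a parameter and $i$ as the summation index, creative telescoping produces an identity $\sum_{k}a_k(d,t)\,c_{m,d+k}^i\,t^i=G(d,i+1,t)-G(d,i,t)$ whose right-hand side telescopes upon summing over $i$; the boundary terms are explicit and low-degree. This yields a linear recurrence
\[
\sum_{k} a_{k}(d,t)\,P_{U_{m,d+k}}(t)=R_{m,d}(t),
\]
whose coefficients $a_k(d,t)$ are polynomials in $t$ (with coefficients rational in $d$) and whose order and $t$-degree are bounded for each fixed $m$. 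For small $m$ I expect this to reduce to an effectively three-term recurrence
$P_{U_{m,d}}(t)=\alpha_{m,d}(t)\,P_{U_{m,d-1}}(t)+\beta_{m,d}(t)\,P_{U_{m,d-2}}(t)$
(consistent with the degree jumping by one from $d-2$ to $d$), which is the ideal shape for interlacing.

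With such a recurrence in hand, I would invoke the standard real-rootedness-preservation criterion for three-term recurrences: if $P_{U_{m,d-1}}$ and $P_{U_{m,d-2}}$ are real-rooted with interlacing zeros, the leading coefficients are positive, and $\beta_{m,d}(t)\leq 0$ throughout the relevant region of the real line (so that the recurrence preserves, rather than destroys, the alternation of signs at the interlacing nodes), then $P_{U_{m,d}}$ is real-rooted and interlaces $P_{U_{m,d-1}}$. The base cases $d=1,2,3$ give constant and linear polynomials, for which real-rootedness and the required interlacing are immediate, so the induction can be started. Combining this with the positivity of the coefficients established above then delivers the conclusion that all zeros are negative.

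The main obstacle, and the reason the statement is confined to $2\leq m\leq 15$, is the verification of the sign hypotheses on $\alpha_{m,d}$ and $\beta_{m,d}$ uniformly in $d$. For each fixed $m$ these hypotheses reduce to the positivity (or the correct definite sign) of the finitely many coefficient polynomials of $\alpha_{m,d}(t),\beta_{m,d}(t)$ regarded as functions of $d$, valid for all $d$ beyond the base range; this is a finite, checkable condition for each individual $m$. The difficulty is that both the order of the recurrence and the $t$-degrees of its coefficients grow with $m$, so the number and complexity of the sign conditions increases, making the uniform-in-$d$ verification feasible only for small $m$ and presumably causing the clean sign pattern to degrade beyond $m=15$. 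Converting the creative-telescoping output into a recurrence of genuinely three-term (or otherwise interlacing-friendly) form, and then certifying these sign conditions for every $d$, is the delicate part of the proof; everything else is bookkeeping built on the positivity and degree facts recorded at the outset.
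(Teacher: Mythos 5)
Your reduction of the theorem to real-rootedness is sound and matches the paper's last step: by Theorem~\ref{thm-ukl3} every coefficient $c_{m,d}^i$ is manifestly positive, so a real-rooted $P_{U_{m,d}}(t)$ can only have negative zeros. But the engine of your argument --- a three-term recurrence $P_{U_{m,d}}(t)=\alpha_{m,d}(t)\,P_{U_{m,d-1}}(t)+\beta_{m,d}(t)\,P_{U_{m,d-2}}(t)$ with sign-controlled coefficients --- is never produced; it is only ``expected.'' That is the crux of your proof, not bookkeeping. Note that \eqref{ukl3} presents $P_{U_{m,d}}(t)$ as a double sum (over $i$ and $h$), so for fixed $m$ creative telescoping guarantees only \emph{some} linear recurrence in $d$, of a priori uncontrolled order, with coefficients in $(d,t)$ of uncontrolled degree and sign; nothing forces order two, and the standard interlacing-propagation criteria for three-term recurrences have no off-the-shelf analogue for higher-order recurrences whose coefficients change sign. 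You verify neither the existence of the hypothesized recurrence nor the sign condition on $\beta_{m,d}(t)$ for even a single value $m\geq 2$, and your account of the bound $m\leq 15$ (a degrading sign pattern) is also speculative --- in the paper the restriction is purely one of computation time, and Conjecture~\ref{conj-ukl} asserts the relevant property for all $m$. As it stands, the proposal is a research plan whose feasibility is untested exactly at its central point.

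For contrast, the paper avoids recurrences in $d$ entirely. It factors the coefficient as $c_{m,d}^i=\binom{d+2m}{i+m}\binom{d-i-1}{i}f_m(d,i)\big/\binom{d+2m}{m}$, observes that $\sum_i\binom{d-i-1}{i}t^i$ is real-rooted (it is essentially a Fibonacci polynomial) and that $\bigl\{\binom{d+2m}{i+m}\bigr\}_{i\geq 0}$ is a multiplier sequence (Lemma~\ref{lem-uklroots}), and then proves that $\{f_m(d,i)\}_{i=0}^d$ is a $d$-sequence for $2\leq m\leq 15$ (Theorem~\ref{thm-dsequence}): by the characterization of $n$-sequences this reduces to real-rootedness of a polynomial $G_{m,d}(t)$ of fixed degree $2(m-1)$, certified via the Hermite--Borchardt criterion by showing finitely many Hurwitz determinants are polynomials in $d'=d-2(m-1)$ with positive coefficients. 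If you want to salvage your route, you would need to actually compute the recurrence for, say, $m=2$ and exhibit the interlacing-friendly sign conditions uniformly in $d$; until then the key step is missing.
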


The next part of this paper is concerned with the $Z$-polynomials of uniform matroids.
The notion of the $Z$-polynomial of a matroid was introduced by Proudfoot, Xu, and  Young \cite{proudfoot2017z}. Given a matroid $M$, its $Z$-polynomial is defined by
$$Z_M(t):= \sum_{F\in L(M)}{t^ {\rk M_F} P_{M^F}(t)}.$$
Proudfoot, Xu, and  Young \cite{proudfoot2017z}  showed that
\begin{align}\label{kltoz}
 Z_{U_{m,d}}(t)=t^d+\sum_{k=1}^{d}{\binom{d+m}{k+m} t^{d-k}P_{U_{m,k}}(t)}.
\end{align}
Based on this formula, they  proved that $Z_{U_{1,d}}(t)$ is just a Narayana polynomial. Denote by $z_{m,d}^i$ the coefficient of $t^i$ in $Z_{U_{m,d}}(t)$. We obtain an explicit expression  of $z_{m,d}^i$ as given below.

\begin{thm}\label{thm-uz1}
For any $m$, $d$, and $0\leq i\leq d$,
we have
\begin{align}
z_{m,d}^i=\frac{\binom{d+m}{i+m} \binom{d+m}{i}}{\binom{d+m}{m}}
{\sum _{h=0}^{m-1}  \frac{i (h-m+1)+m}{(h+1) m} \binom{i-1+h}{h} \binom{d-i+h}{h}}\label{uz1}
\end{align}
\end{thm}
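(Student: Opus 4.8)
The plan is to substitute the closed formula \eqref{ukl3} for the Kazhdan-Lusztig coefficients directly into the expression \eqref{kltoz} for $Z_{U_{m,d}}(t)$ and extract the coefficient of $t^i$. Writing $P_{U_{m,k}}(t)=\sum_j c_{m,k}^j t^j$, the summand $t^{d-k}P_{U_{m,k}}(t)$ feeds the monomial $t^i$ exactly through its $j=i-d+k$ coefficient, so collecting terms gives
\[
z_{m,d}^i=[\,i=d\,]+\sum_{k}\binom{d+m}{k+m}\,c_{m,k}^{\,i-d+k},
\]
where the isolated Iverson term records the leading monomial $t^d$ of \eqref{kltoz}. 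I would then reindex by $j=i-d+k$, so that $k=d-i+j$ and the difference $k-j=d-i$ stays constant, turning the sum into one over $0\le j\le i$. This separates cleanly into a main regime $0\le i\le d-1$ and the boundary value $i=d$.

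In the main regime, substituting \eqref{ukl3} at $k=d-i+j$ yields
\[
c_{m,d-i+j}^{\,j}=\frac{1}{d-i}\binom{d-i+j+m}{j}\sum_{h=0}^{m-1}\binom{d-i+h}{h+j+1}\binom{j-1+h}{h},
\]
and the factor $\binom{d-i+h}{h+j+1}$ vanishes automatically once $j\ge d-i$, so no extra truncation of the range is needed. Applying the subset-of-a-subset identity $\binom{d+m}{d-i+j+m}\binom{d-i+j+m}{j}=\binom{d+m}{j}\binom{d+m-j}{i-j}$ and interchanging the two summations, I arrive at
\[
z_{m,d}^i=\frac{1}{d-i}\sum_{h=0}^{m-1}\sum_{j=0}^{i}\binom{d+m}{j}\binom{d+m-j}{i-j}\binom{d-i+h}{h+j+1}\binom{j-1+h}{h},
\]
valid for $0\le i\le d-1$.

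The heart of the matter is the inner sum over $j$. For each fixed $h$ it is a terminating hypergeometric sum, and the asserted identity \eqref{uz1} amounts to showing that this sum evaluates, term by term in $h$, to $(d-i)$ times the corresponding summand on the right-hand side of \eqref{uz1}, i.e.\ to a rational multiple of $\binom{i-1+h}{h}\binom{d-i+h}{h}$ carrying the linear factor $i(h-m+1)+m$; the prefactor $1/(d-i)$ then cancels against a factor $(d-i)$ emerging from the evaluation. I would certify this closed form by Zeilberger's algorithm: produce a recurrence in $i$ (or in $d$) annihilating the inner $j$-sum, verify that the conjectured right-hand side obeys the same recurrence, and match the finitely many initial values.

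Two boundary cases need separate attention, and they serve as useful sanity checks. When $i=0$ only $k=d$ survives and $z_{m,d}^0=c_{m,d}^0=1$, matching \eqref{uz1} since every summand except $h=0$ carries the vanishing weight $\binom{h-1}{h}$. When $i=d$ the derivation above breaks down through the $1/(d-i)$; here $z_{m,d}^d=1$ directly from \eqref{kltoz}, because no lower term reaches degree $d$, and one checks independently that the right-hand side of \eqref{uz1} collapses to $\sum_{h=0}^{m-1}\frac{d(h-m+1)+m}{(h+1)m}\binom{d-1+h}{h}=1$, again a one-line consequence of creative telescoping. I expect the main obstacle to be precisely the hypergeometric evaluation of the inner $j$-sum: its summand mixes four binomials with the nontrivial weight $\binom{j-1+h}{h}$, and coaxing it into the compact, manifestly balanced shape of \eqref{uz1} is where the Zeilberger certificate does the real work.
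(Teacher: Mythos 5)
Your setup is sound: extracting coefficients from \eqref{kltoz}, reindexing by $j=i-d+k$, observing that the formal extension of \eqref{ukl3} vanishes exactly where $c_{m,k}^{j}$ must vanish, and the subset-of-a-subset identity are all correct, and your boundary checks at $i=0$ and $i=d$ are fine (the $i=d$ telescoping is exactly what the paper does). The gap is in the step you yourself call the heart of the matter: the claim that for each \emph{fixed} $h$ the inner $j$-sum equals $(d-i)$ times the $h$-th summand of \eqref{uz1}. This is false. Take $m=2$, $d=4$, $i=1$: your inner sums are
\begin{align*}
\sum_{j=0}^{1}\binom{6}{j}\binom{6-j}{1-j}\binom{3+h}{h+j+1}\binom{j-1+h}{h}
=\begin{cases}
36, & h=0,\\
24, & h=1,
\end{cases}
\end{align*}
whereas $(d-i)\,\frac{\binom{6}{3}\binom{6}{1}}{\binom{6}{2}}\cdot\frac{i(h-m+1)+m}{(h+1)m}\binom{i-1+h}{h}\binom{d-i+h}{h}$ equals $12$ for $h=0$ and $48$ for $h=1$. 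The totals agree ($36+24=12+48=60$, giving the correct $z_{2,4}^{1}=60/3=20$), but the termwise values do not: the identity \eqref{uz1} holds only after summing over $h$, not $h$-by-$h$. Consequently the Zeilberger certification you describe (recurrence for the $j$-sum at fixed $h$, matched against the conjectured $h$-th summand) would simply fail --- the two sides are not equal, so no recurrence-plus-initial-values argument can reconcile them.

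What you are actually left with is a genuine double-sum identity, which single-variable creative telescoping applied to the $j$-sum cannot certify; you would need multi-sum telescoping, or a different decomposition. The paper sidesteps this by substituting the alternating formula \eqref{ukl2}, rather than \eqref{ukl3}, into \eqref{kltoz}: there the inner sum (over $k$) collapses by Chu--Vandermonde into the closed single sum of Theorem \ref{uz2},
\begin{align*}
z_{m,d}^i=\frac{\binom{d+2 m}{i+m}\binom{d}{i}}{\binom{d+2 m}{m}}\sum_{h=1}^{m}\frac{(-1)^{h+1} h }{m} \binom{i+m}{m-h} \binom{d-i-h+m-1}{m-1},
\end{align*}
and \eqref{uz1} is then proved by showing that this $h$-sum and the $h$-sum in \eqref{uz1} --- each taken as a whole sum, not termwise --- satisfy the same first-order recurrence in $m$ (found by Zeilberger's algorithm) with equal initial values at $m=1$. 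So either switch to \eqref{ukl2} and follow that route, or keep \eqref{ukl3} but upgrade your certification step to a method that can handle the full double sum; as written, your main regime does not go through.
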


Proudfoot, Xu, and  Young \cite{proudfoot2017z} also conjectured that the $Z$-polynomial $Z_M(t)$ has only negative zeros for any matroid $M$. It is well known that the classical Narayana polynomial has only negative zeros. Thus, their conjecture is valid for $Z_{U_{1,d}}(t)$.
Parallel to Theorem \ref{thm-uklroot}, we obtain the following result.

\begin{thm}\label{thm-uzroot}
For $2\leq m\leq 15$ and any $d\geq 1$, the polynomial $Z_{U_{m,d}}(t)$ has only negative zeros.
\end{thm}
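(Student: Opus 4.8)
The plan is to reduce the statement to the real-rootedness of $Z_{U_{m,d}}(t)$ and then to prove that real-rootedness, for each fixed $m\in\{2,\dots,15\}$, by an interlacing induction on $d$ driven by a recurrence extracted from the explicit formula \eqref{uz1}.

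First I would observe that, for polynomials of this shape, having only negative zeros is equivalent to being real-rooted. Since $U_{m,d}$ is representable, the coefficients $c^i_{m,k}$ of its Kazhdan-Lusztig polynomials are nonnegative; formula \eqref{kltoz} then expresses $Z_{U_{m,d}}(t)$ as a nonnegative combination of the polynomials $t^{d-k}P_{U_{m,k}}(t)$, so every coefficient $z^i_{m,d}$ is nonnegative. As $Z_{U_{m,d}}(t)$ is monic of degree $d$ with constant term $z^0_{m,d}=1$, one has $Z_{U_{m,d}}(r)\ge 1>0$ for every $r\ge 0$, so a real-rooted such polynomial can have only negative zeros. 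Hence it suffices to prove that $Z_{U_{m,d}}(t)$ is real-rooted.

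Next I would extract a recurrence in $d$. Applying Zeilberger's algorithm to the hypergeometric summand over $h$ in \eqref{uz1} produces, for each fixed $m$, a linear recurrence for the coefficients $z^i_{m,d}$ with coefficients polynomial in $d$ and $i$; encoding the $i$-dependence through the Euler operator $t\frac{d}{dt}$ and summing against $t^i$ converts it into a recurrence for the polynomials $Z_{U_{m,d}}(t)$ themselves. Since $\deg Z_{U_{m,d}}=d$ and each $Z_{U_{m,d}}$ is monic, the natural target is an essentially three-term recurrence
$$Z_{U_{m,d}}(t)=A_{m,d}(t)\,Z_{U_{m,d-1}}(t)-B_{m,d}(t)\,Z_{U_{m,d-2}}(t),$$
with $A_{m,d}(t)$ linear in $t$ with positive leading coefficient and $B_{m,d}(t)$ of controlled sign; should a derivative term $Z_{U_{m,d-1}}'(t)$ appear instead, the argument would proceed through the corresponding Liu--Wang type criterion for real-rootedness of polynomial sequences.

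Finally I would run the interlacing induction: assuming $Z_{U_{m,d-2}}(t)$ and $Z_{U_{m,d-1}}(t)$ are real-rooted with interlacing zeros, the sign conditions on $A_{m,d}(t)$ and $B_{m,d}(t)$ force $Z_{U_{m,d}}(t)$ to be real-rooted and to interlace $Z_{U_{m,d-1}}(t)$, with a few small values of $d$ serving as base cases. The crux of the argument, and the reason the theorem is confined to $2\le m\le 15$ in parallel with Theorem~\ref{thm-uklroot}, is verifying that the coefficients $A_{m,d}(t)$ and $B_{m,d}(t)$ satisfy the requisite positivity and sign inequalities uniformly in $d$. These inequalities depend on $m$ through the Zeilberger output, and I expect them to be confirmable only up to $m=15$, beyond which the resulting algebraic conditions either escape the available estimates or become computationally intractable. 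Pinning down these coefficient inequalities for all $d$ is therefore the main obstacle.
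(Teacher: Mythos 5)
Your reduction from ``only negative zeros'' to real-rootedness is sound and matches the paper: $Z_{U_{m,d}}(t)$ is monic with nonnegative coefficients and constant term $1$ (by \eqref{kltoz} together with the nonnegativity of the $c^i_{m,k}$), so it cannot vanish at any $r\ge 0$. But everything after that is a plan, not a proof, and the plan rests on an object you never produce: the three-term recurrence
$Z_{U_{m,d}}(t)=A_{m,d}(t)\,Z_{U_{m,d-1}}(t)-B_{m,d}(t)\,Z_{U_{m,d-2}}(t)$
with $A_{m,d}$ linear in $t$ and $B_{m,d}$ of controlled sign. You give no derivation of it, no reason it should have order two, and no verification of the sign conditions that the interlacing induction would need --- and you say yourself that these are ``the main obstacle.'' Converting a Zeilberger recurrence for the coefficients $z^i_{m,d}$ (which are double-indexed holonomic sequences) into a recurrence for the polynomials generically produces derivative terms in $t$ and coefficients of degree growing with $m$; even in the known case $m=1$, where $Z_{U_{1,d}}$ is a Narayana polynomial, the three-term recurrence has $B$ proportional to $(1-t)^2$, and nothing you write suggests that an analogous sign structure persists for $m\ge 2$ or that consecutive $Z$-polynomials interlace at all. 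So the proposal has a genuine gap: its crux is precisely the part that is asserted rather than proved.

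For comparison, the paper takes a different and fully explicit route. It rewrites $Z_{U_{m,d}}(t)=\frac{1}{\binom{d+2m}{m}}\sum_{i=0}^{d}\binom{d+2m}{i+m}\binom{d}{i}\,b_m(d,i)\,t^i$, where $b_m(d,i)$ is the inner sum in \eqref{uz1}. Since $\{\binom{d+2m}{i+m}\}_i$ is a multiplier sequence (Lemma \ref{ms-u}) applied to $(1+t)^d$, the polynomial $\sum_i\binom{d+2m}{i+m}\binom{d}{i}t^i$ is real-rooted (Lemma \ref{lem-uzroots}); it then remains to show $\{b_m(d,i)\}_{i=0}^d$ is a $d$-sequence (Theorem \ref{thm-dsequence-z}). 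By the Craven--Csordas characterization (Theorem \ref{n-char}) this amounts to real-rootedness of $\sum_i b_m(d,i)\binom{d}{i}t^i$, which, after expanding $b_m(d,i)$ in falling factorials, collapses to real-rootedness of a polynomial $Y_{m,d}(t)$ of fixed degree $2(m-1)$ independent of $d$; that is certified by showing the Borchardt--Hermite Hurwitz determinants $\Delta_{2k}(Y_{m,d},Y'_{m,d})$ are polynomials in $d'=d-2(m-1)$ with positive coefficients, checked by computer algebra for each $2\le m\le 15$. Note in particular that the bound $m\le 15$ in the paper is purely a matter of computation time for these determinant checks, not a threshold where inequalities start to fail --- so your guess about why the theorem stops at $15$ also mischaracterizes the nature of the restriction, though in your framework one would likewise need a per-$m$ verification.
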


This paper is organized as follows. In Section \ref{sec:unewkl} we first give a proof of Theorem \ref{thm-ukl2} by using Proposition \ref{prop-klrec}, and then derive Theorem \ref{thm-ukl3}
from Theorem \ref{thm-ukl2}. We would like to point out that Zeilberger's algorithm plays an important role for our proofs of Theorems \ref{thm-ukl2} and \ref{thm-ukl3}.
The second part of Section \ref{sec:unewkl} is devoted to the proof of Theorem \ref{thm-uklroot}. Finally we give a new proof of Theorem \ref{thm-ukl1} without the help of equivariant Kazhdan-Lusztig polynomials. In Section \ref{sec:uz}, we prove Theorems \ref{thm-uz1} and \ref{thm-uzroot}.
To prove the real-rootedness of $P_{U_{m,d}}(t)$ and $Z_{U_{m,d}}(t)$, we utilize the theory of multiplier sequences and the theory of $n$-sequences.

\section{The \KL polynomials}\label{sec:unewkl}

This section is devoted to the study of the \KL polynomials of uniform matroids.
First, we  verify that \eqref{ukl2} satisfies the recursive relation \eqref{uklrec}, and then derive \eqref{ukl3} from \eqref{ukl2}.
Secondly, we  use \eqref{ukl2} to prove Theorem \ref{thm-uklroot}. Finally, we  show how to prove \eqref{ukl1} by using \eqref{ukl2}.

\subsection{Polynomial coefficients}

The aim of this subsection is to prove Theorems \ref{thm-ukl2} and \ref{thm-ukl3}.


\begin{proof}[Proof of Theorem \ref{thm-ukl2}]
It suffices to show that \eqref{ukl2} satisfies the recursion \eqref{uklrec} together with the initial values $c_{m,1}^0=1$. It is straightforward to verify that the right hand side of \eqref{ukl2} is equal to $1$ when $d=1$ and $i=0$.

It remains to show that \eqref{ukl2} satisfies the recursion \eqref{uklrec}. To this end, we substitute \eqref{ukl2} into \eqref{uklrec}, which yields the left hand side
\begin{align}
(LHS)&=\binom{d+m}{i}\sum_{h=1}^{m}\frac{(-1)^{h+1}h}{d-h-i+m} \binom{d-h-i+m}{d-2i-h}\binom{m+i}{m-h}\label{ukl2-lhs}
\end{align}
and the right hand side
\begin{align*}
(RHS)=(-1)^i\binom{d+m}{i}+&\sum_{j=0}^{i-1}\sum_{k=2j+1}^{i+j}
\sum_{h=1}^{m}(-1)^{i+j+k+h+1}\frac{h}{k-h-j+m}\binom{m+j}{m-h}
\binom{k+m}{j} \notag\\
&\qquad \qquad \quad \times  \binom{k-h-j+m}{k-2j-h}\binom{m+d}{m+k,i+j-k,d-i-j}.
\end{align*}
It is enough to show that $(LHS)=(RHS)$.

In the following we will reduce the triple summation in $(RHS)$ into a single summation. By interchanging the order of summation of $(RHS)$, we obtain
\begin{align*}
(RHS)=(-1)^i\binom{d+m}{i}+&\sum_{j=0}^{i-1}
\sum_{h=1}^{m}\sum_{k=2j+1}^{i+j}(-1)^{i+j+k+h+1}\frac{h}{k-h-j+m}\binom{m+j}{m-h}
\binom{k+m}{j} \notag\\
&\qquad \qquad \quad \times  \binom{k-h-j+m}{k-2j-h}\binom{m+d}{m+k,i+j-k,d-i-j}.
\end{align*}
Note that
\begin{align*}
\binom{m+d}{m+k,i+j-k,d-i-j}&=\binom{m+d}{d-i-j}\binom{m+i+j}{m+k},\\[5pt]
\binom{k-h-j+m}{k-2j-h}&=\frac{k-h-j+m}{m+j}\binom{k-h-j+m-1}{k-2j-h}.
\end{align*}
Substituting into the right hand side of the above summation, we get
\begin{align*}
(RHS)=(-1)^i\binom{d+m}{i}+&\sum_{j=0}^{i-1}
\sum_{h=1}^{m}\sum_{k=2j+1}^{i+j}(-1)^{i+j+k+h+1}\frac{h}{m+j}\binom{m+j}{m-h}
\binom{k+m}{j} \notag\\
&\times  \binom{k-h-j+m-1}{k-2j-h}\binom{m+d}{d-i-j}\binom{m+i+j}{m+k}.
\end{align*}
Therefore,
\begin{align}
(RHS)&=(-1)^i\binom{d+m}{i}+\sum_{j=0}^{i-1}
\sum_{h=1}^{m}(-1)^{i+j+h+1}\frac{h}{m+j}\binom{m+j}{m-h}\binom{m+d}{d-i-j}F_{j,h},\label{ukl2-rhs}
\end{align}
where
\begin{align*}
F_{j,h}&=\sum_{k=2j+1}^{i+j}(-1)^k
\binom{k+m}{j}\binom{k-h-j+m-1}{k-2j-h}\binom{m+i+j}{m+k}.
\end{align*}

We claim that
\begin{align}
F_{j,h}=(-1)^h\binom{m+i+j}{m+i}\binom{i-j}{h}.\label{eq-mid}
\end{align}
This is because
\begin{align*}
F_{j,h}&=\sum_{k=2j+h}^{i+j}(-1)^k
\binom{k+m}{j}\binom{k-h-j+m-1}{k-2j-h}\binom{m+i+j}{m+k}\\[5pt]
&=\sum_{k=0}^{i-j-h}(-1)^{k+h}
\binom{k+2j+h+m}{j}\binom{k+j+m-1}{k}\binom{m+i+j}{m+k+2j+h}\\[5pt]
&=(-1)^h\sum_{k=0}^{i-j-h}(-1)^{k}\binom{k+j+m-1}{k}
\binom{m+i+j}{m+k+2j+h}\binom{k+2j+h+m}{j}\\[5pt]
&=(-1)^h\sum_{k=0}^{i-j-h}(-1)^{k}\binom{k+j+m-1}{k}
\binom{m+i+j}{m+i}\binom{m+i}{i-j-h-k}\\[5pt]
&=(-1)^h\binom{m+i+j}{m+i}\sum_{k=0}^{i-j-h}\binom{-j-m}{k}
\binom{m+i}{i-j-h-k}\\[5pt]
&=(-1)^h\binom{m+i+j}{m+i}\binom{i-j}{i-j-h},
\end{align*}
where the last equality is obtained by the Chu-Vandermonde identity.

Substituting \eqref{eq-mid} into \eqref{ukl2-rhs}, we obtain that \begin{align*}
(RHS)=&(-1)^i\binom{d+m}{i}+\sum_{j=0}^{i-1}
(-1)^{i+j+1}\binom{m+d}{d-i-j}\binom{m+i+j}{m+i}\\
& \times \left(\sum_{h=1}^{m}\frac{h}{m+j}\binom{m+j}{m-h}
\binom{i-j}{h}\right).
\end{align*}
Again by the Chu-Vandermonde identity, we have
$$\sum_{h=1}^{m}\frac{h}{m+j}\binom{m+j}{m-h}
\binom{i-j}{h}=\sum_{h=1}^{m}\frac{i-j}{m+j}\binom{m+j}{m-h}
\binom{i-j-1}{h-1}=\frac{i-j}{m+j}\binom{m+i-1}{m-1}.$$
Thus, we have
\begin{align*}
(RHS)=&(-1)^i\binom{d+m}{i}+\sum_{j=0}^{i-1}
(-1)^{i+j+1}\frac{i-j}{m+j}\binom{m+d}{d-i-j}\binom{m+i+j}{m+i}\binom{m+i-1}{m-1}.
\end{align*}
Combining the above identity and \eqref{ukl2-lhs}, we see that
$(LHS)=(RHS)$ is equivalent to  the following identity:
\begin{align*}
&\sum_{h=1}^m \frac{(-1)^{i+h+1} h (d-h-i+m-1)!}{(h+i)! (m-h)! (d-2 i-h)!}-\sum_{j=0}^{i}\frac{(-1)^{j+1}(i-j)(m+d-i)!}{(i+m)(j+m)j!(d-i-j)!(m-1)!}=1.
\end{align*}
It remains to prove the above identity.
Let
\begin{align*}
 p_m&=\sum_{h=1}^m \frac{(-1)^{i+h+1} h (d-h-i+m-1)!}{(h+i)! (m-h)! (d-2 i-h)!},\\
 q_m&=\sum_{j=0}^{i}\frac{(-1)^{j+1}(i-j)(m+d-i)!}{(i+m)(j+m)j!(d-i-j)!(m-1)!}.
\end{align*}
Since both $p_m$ and $q_m$ are hypergeometric summations, we are able to prove  $p_m-q_m=1$ with the aid of a computer algebra system.
As illustrated by the following lines, the application of Zeilberger's algorithm yields the following equality
$$p_{m+1}-p_{m}=q_{m+1}-q_{m}.$$
Here we use a Mathematica package \textit{fastZeil} due to Paule and Schorn \cite{paule1995mathematica}.

\begin{mma}
\In <<|RISC| ~\grave{} |fastZeil|~\grave{};\\
\end{mma}
\begin{mma}
\In |Zb|\big[\frac{(-1)^{j+1} (i-j) (d-i+m)!}{(i+m) (j+m) j! (d-i-j)! (m-1)! },\{j,0,i\},m\big];\\
\end{mma}
\begin{mma}
\In |FullSimplify|[\%]/.|Gamma|[n\_]\to(n-1)!\\
\end{mma}
\begin{mma}
\Out \left\{(d-i) (|SUM|[m]-|SUM|[1+m])==\left.\frac{(-1)^{i+1} (d-i+m)!}{(i+m) (i+m+1) (i-1)! m!  (d-2 i-1)!}\right\}\right\}\\
\end{mma}
\begin{mma}
\In |Zb|\big[\frac{h (-1)^{i+h+1} (d-h-i+m-1)!}{(h+i)! (m-h)! (d-2 i-h)!},\{h,0,m\},m\big];\\
\end{mma}
\begin{mma}
\In  |FullSimplify|[\%]/.|Gamma|[n\_]\to(n-1)!\\
\end{mma}
\begin{mma}
\Out \left\{(d-i) (i+m) (i+m+1) (-|SUM|[m]+|SUM|[1+m])==\frac{(-1)^i (d-i+m)!}{(i-1)! m! (d-2 i-1)!}\right\}\\
\end{mma}

The proof of the theorem will be complete once we show that $p_{1}-q_{1}=1$. By direct computation, we have
\begin{align*}
p_1= \frac{(-1)^{i}}{i+1}\binom{d-i-1}{i}.
\end{align*}
On the other hand, we have
\begin{align*}
q_1&=\frac{1}{i+1}\times \sum_{j=0}^{i-1}(-1)^{j+1}(i-j)\binom{d-i+1}{j+1}\\[5pt]
&=-1+\frac{1}{i+1}\times \sum_{j=-1}^{i-1}(-1)^{j+1}(i-j)\binom{d-i+1}{j+1}\\[5pt]
&=-1+\frac{(-1)^i}{i+1}\times \sum_{j=0}^i(-1)^{i-j}(i-j+1)\binom{d-i+1}{j}\\
&=-1+\frac{(-1)^i}{i+1}\times \sum_{j=0}^i\binom{-2}{i-j}\binom{d-i+1}{j}\\
&=-1+\frac{(-1)^i}{i+1}\binom{d-i-1}{i},
\end{align*}
where the last equality is obtained by the Chu-Vandermonde identity.
This completes the proof.
\end{proof}

\begin{proof}[Proof of Theorem \ref{thm-ukl3}]
In view of \eqref{ukl2} and \eqref{ukl3}, it suffices to show that
\begin{align*}
\sum_{h=1}^{m}\frac{(-1)^{h+1}h}{d-h-i+m} \binom{d-h-i+m}{d-2i-h}\binom{m+i}{m-h}={\frac{1}{d-i}{\sum _{h=0}^{m-1}\binom{d-i+h}{h+i+1}\binom{i-1+h}{h} } }.
\end{align*}
Denote by $f_{m,d}^i$ the left hand side of the above identity,
and denote by $g_{m,d}^i$ its right hand side.
It is routine to show that
\begin{align*}
f_{1,d}^i=g_{1,d}^i= \frac{1}{d-i}\binom{d-i}{i+1}.
\end{align*}
Therefore, it is sufficient to show that
\begin{align*}
f_{m+1,d}^i-f_{m,d}^i=g_{m+1,d}^i-g_{m,d}^i={\frac{1}{d-i}
\binom{d-i+m}{m+i+1}\binom{i-1+m}{m} }.
\end{align*}
Now apply Zeilberger's algorithm to $f_{m,d}^i$ along the following lines.

\begin{mma}
\In |Zb|\big[\frac{(-1)^{h+1} h}{d-h-i+m}|Binomial|[d-h-i+m,d-h-2 i] |Binomial|[i+m,m-h]\break  ,\{h,1,m\},m,1\big];\\
\begin{mma}
\end{mma}
\In |FullSimplify|[\%]/.|Gamma|[n\_]\to(n-1)!\\
\Out \left\{(d - i) (i + m) (1 + i + m)(-|SUM|[m]+|SUM|[1+m])==\frac{(d-i+m)!}{(i-1)! m! (d-2 i-1)!}\right\}\\
\end{mma}

Thus we have
\begin{align*}
f_{m+1,d}^i-f_{m,d}^i&=\frac{ (d-i+m)!}{(d-i)(i+m)(i+m+1) (i-1)!m!(d-2 i-1)!}\\[5pt]
&={\frac{1}{d-i}\binom{d-i+m}{m+i+1}\binom{i-1+m}{m} },
\end{align*}
as desired.
This completes the proof.
 \end{proof}

\subsection{Real zeros}\label{sec:uklroot}

This subsection is devoted to the study of the real-rootedness of the Kazhdan-Lusztig polynomials $P_{U_{m,d}}(t)$ by using the  theory of multiplier sequences and the theory of $n$-sequences, for which we refer the reader to \cite{craven1983location1,craven1980intersections,craven2004composition}.

Let us recall some related concepts and results.
A sequence $\Gamma=\{\gamma_k\}_{k=0}^{\infty}$ of real numbers  is called a \emph{multiplier sequence} if, whenever any  real polynomial
$$f(t)=\sum_{k=0}^{n}a_kt^k$$ has only real zeros, so does the polynomial $$\Gamma[f(t)]=\sum_{k=0}^{n}\gamma_ka_kt^k.$$
We have the following result.

\begin{lem}\label{ms-u}
For any $m$ and $d$, the sequence  $\{\binom{d+2 m}{i+m}\}_{i=0}^{\infty}$
is a multiplier sequence.
\end{lem}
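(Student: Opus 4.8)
The plan is to invoke the classical P\'olya--Schur characterization of multiplier sequences, which is the cornerstone of the theory cited in the references: a sequence $\{\gamma_i\}_{i=0}^{\infty}$ of nonnegative real numbers is a multiplier sequence if and only if its generating function $\Phi(x)=\sum_{i\ge 0}\frac{\gamma_i}{i!}x^i$ lies in the Laguerre--P\'olya class $\mathcal{LP}^+$, that is, it is a locally uniform limit of real polynomials having only nonpositive zeros. In our situation the terms $\gamma_i=\binom{d+2m}{i+m}$ are nonnegative and vanish for $i>d+m$, so $\Phi$ is actually a polynomial; for a polynomial, membership in $\mathcal{LP}^+$ is equivalent to having only real nonpositive zeros. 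Thus the whole problem reduces to locating the zeros of $\Phi$.

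First I would compute $\Phi$ explicitly. Setting $N=d+2m$, we have $\Phi(x)=\sum_{i=0}^{d+m}\frac{1}{i!}\binom{N}{i+m}x^i$. The decisive step is to recognize this as an evaluation of a generalized Laguerre polynomial. Comparing with $L_n^{(\alpha)}(x)=\sum_{i=0}^{n}(-1)^i\binom{n+\alpha}{n-i}\frac{x^i}{i!}$ and choosing $n=d+m$ and $\alpha=m$, so that $n+\alpha=N$ and $\binom{n+\alpha}{n-i}=\binom{N}{i+m}$, one obtains the clean identity $\Phi(x)=L_{d+m}^{(m)}(-x)$.

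I would then invoke the classical fact that for $\alpha>-1$ the generalized Laguerre polynomial $L_n^{(\alpha)}(x)$ has only real positive zeros, being orthogonal on $(0,\infty)$ with respect to the weight $x^{\alpha}e^{-x}$. Here $\alpha=m\ge 1>-1$, so every zero of $L_{d+m}^{(m)}$ is positive, whence every zero of $\Phi(x)=L_{d+m}^{(m)}(-x)$ is negative. Consequently $\Phi\in\mathcal{LP}^+$, and the P\'olya--Schur criterion yields that $\{\binom{d+2m}{i+m}\}_{i=0}^{\infty}$ is a multiplier sequence.

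The only nonroutine point is the identification of $\Phi$ with a Laguerre polynomial; once that is in hand, the argument is entirely classical, and I expect no serious obstacle. If one prefers to bypass the explicit Laguerre identity, there is a parallel route: the sequence $\{\binom{N}{i}\}_{i=0}^{\infty}$ is already a multiplier sequence (its generating function is $L_N(-x)$), and the Laguerre--P\'olya class is closed under differentiation, which on the level of Taylor coefficients is exactly the index shift $\gamma_i\mapsto\gamma_{i+1}$. Applying this shift $m$ times carries $\{\binom{N}{i}\}$ into $\{\binom{N}{i+m}\}$ and gives the same conclusion.
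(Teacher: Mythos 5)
Your proof is correct, but it takes a genuinely different route from the paper's. The paper disposes of the lemma in one line: writing $\binom{d+2m}{i+m}=(d+2m)!\cdot\frac{1}{(i+m)!}\cdot\frac{1}{(d+m-i)!}$, it invokes the known facts that $\{\frac{1}{(m+i)!}\}_{i\geq 0}$ and $\{\frac{1}{(m+d-i)!}\}_{i\geq 0}$ are multiplier sequences (cited from \cite{lu2018kazhdan}), together with the closure of multiplier sequences under positive scaling and termwise products, which is immediate from the definition because $(\Gamma_1\Gamma_2)[f]=\Gamma_1\bigl[\Gamma_2[f]\bigr]$. You instead apply P\'olya--Schur directly: your Laguerre identification is correct, since with $n=d+m$, $\alpha=m$ one has $\binom{n+\alpha}{n-i}=\binom{d+2m}{d+m-i}=\binom{d+2m}{i+m}$, so the generating polynomial $\sum_{i}\binom{d+2m}{i+m}\frac{x^i}{i!}$ equals $L_{d+m}^{(m)}(-x)$; and because $\alpha=m>-1$, the zeros of $L_{d+m}^{(m)}$ are all positive, hence the generating polynomial has only negative zeros and the P\'olya--Schur criterion applies (your sequence is nonnegative and eventually zero, so the polynomial case of the criterion suffices). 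What the paper's route buys is brevity, at the cost of outsourcing everything to cited results; what your route buys is self-containedness and explicitness: it locates the zeros of the generating polynomial exactly as negatives of Laguerre zeros, and it in effect explains what underlies the paper's citation, since the facts that $\{1/(m+i)!\}$ and $\{1/(m+d-i)!\}$ are multiplier sequences are themselves classically proved via P\'olya--Schur, their generating functions being a Bessel-type entire function and $(1+x)^{m+d}/(m+d)!$ respectively. Your alternative closing argument is also sound: index shift on the sequence corresponds to differentiation of the generating function, and real-rootedness with nonpositive zeros is preserved under differentiation by Rolle and Gauss--Lucas, so $m$-fold shifting of the multiplier sequence $\{\binom{d+2m}{i}\}$ gives the claim as well.
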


\begin{proof}
It immediately follows from the following known fact:

both
$\{\frac{1}{(m+i)!}\}_{i=0}^{\infty}$ and $\{\frac{1}{(m+d-i)!}\}_{i=0}^{\infty}$ are multiplier sequences,
see \cite{lu2018kazhdan} and references therein.
\end{proof}

A sequence $\Gamma=\{\gamma_i\}_{k=0}^{n}$  is called an \emph{$n$-sequence} if for every polynomial $f(t)$ of degree less than or equal to $n$ and with only real zeros, the polynomial $\Gamma[f(t)]$ also has only real zeros.
We need the following algebraic characterization of $n$-sequences.

\begin{thm}[\cite{craven1983location1}]\label{n-char}
Let $\Gamma=\{\gamma_k\}_{k=0}^{n}$ be a sequence of  real numbers. Then $\Gamma$ is an $n$-sequence if and only if the zeros of the polynomial
$\Gamma[(1+t)^n]$ are all real and of the same sign.
\end{thm}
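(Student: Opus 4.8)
The final statement to prove is Theorem~\ref{n-char}, the algebraic characterization of $n$-sequences: a real sequence $\Gamma=\{\gamma_k\}_{k=0}^{n}$ is an $n$-sequence if and only if all zeros of $\Gamma[(1+t)^n]$ are real and of the same sign. Since the forward direction is almost immediate (the polynomial $(1+t)^n$ has only real zeros, so if $\Gamma$ is an $n$-sequence then $\Gamma[(1+t)^n]$ has only real zeros, and a short argument forces them to share a sign), the substance lies in the converse. The plan is to show that the sign condition on $\Gamma[(1+t)^n]$ propagates to \emph{every} real-rooted polynomial of degree at most $n$.

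My approach would be to reduce the general case to the single test polynomial $(1+t)^n$ by means of a density and composition argument. First I would observe that the operator $\Gamma$ acting on $\sum a_k t^k$ is linear, so it suffices to understand its effect on a spanning family of real-rooted polynomials. The natural family is $\{(a+t)^n : a\in\mathbb{R}\}$ together with lower-degree analogues; the key classical fact I would invoke is that real-rootedness of polynomials of degree $\leq n$ is preserved under limits (Hurwitz's theorem on the convergence of zeros) and that an arbitrary degree-$\leq n$ real-rooted polynomial can be approximated, or directly built, from products of linear factors $\prod_k(t+a_k)$. The operator $\Gamma$ applied to $(t+a)^n=\sum_k \binom{n}{k}a^{n-k}t^k$ yields $\sum_k \gamma_k\binom{n}{k}a^{n-k}t^k$, which is a ``diagonal specialization'' of the two-variable form $\Gamma[(x+ty)^n]$; relating this back to $\Gamma[(1+t)^n]$ via the substitution governed by the sign of $a$ is the technical heart.

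The main obstacle I anticipate is precisely this step of transferring the hypothesis on the single polynomial $(1+t)^n$ to the two-parameter family $(x+ty)^n$ and hence to all real-rooted inputs. This is typically handled by a theorem in the theory of linear operators preserving real-rootedness (in the spirit of the Pólya--Schur theory and its finite-degree refinement due to Craven and Csordas): an operator determined by a sequence is a degree-$n$ real-rootedness preserver exactly when its ``symbol'' $\Gamma[(1+t)^n]$ lies in the appropriate Laguerre--Pólya class, with the same-sign condition encoding that the preserved factorization does not introduce spurious complex zeros. Because the statement is cited from Craven--Csordas \cite{craven1983location1}, I would structure the converse around their characterization: assuming $\Gamma[(1+t)^n]$ has all real zeros of one sign, I would write it as $c\prod(t+b_k)$ with all $b_k$ of the same sign, and then use the apolarity/composition machinery (the Hermite--Poulain--type theorems and Schur--Szeg\H{o} composition) to conclude that $\Gamma$ composed with any real-rooted $f$ of degree $\leq n$ stays real-rooted.

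In summary, the plan is: (i) dispatch the easy forward implication directly from the definition applied to $(1+t)^n$; (ii) for the converse, factor $\Gamma[(1+t)^n]$ using the same-sign hypothesis and invoke the Schur--Szeg\H{o} composition theorem, which states that the composition of two real-rooted degree-$n$ polynomials with zeros of prescribed sign is again real-rooted; and (iii) conclude that $\Gamma$ preserves real-rootedness on all of degree $\leq n$ by expressing $\Gamma[f]$ as such a composition of $f$ with $\Gamma[(1+t)^n]$. I expect step (ii)--(iii)---making the composition identity precise and verifying the sign bookkeeping---to be where essentially all the work resides, while the remaining pieces are formal.
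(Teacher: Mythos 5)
There is nothing in the paper to compare your argument against: Theorem~\ref{n-char} is stated with a citation to Craven and Csordas \cite{craven1983location1} and is used as a black box (only the ``if'' direction is ever invoked, in the proofs of Theorems~\ref{thm-dsequence} and~\ref{thm-dsequence-z}); the paper contains no proof of it. Judged on its own, your outline reproduces the classical argument from the cited literature, and its core is sound. For the substantive ``if'' direction, your steps (ii)--(iii) are exactly right and can be made precise in one line: write $f(t)=\sum_{k=0}^{n}a_kt^k=\sum_{k=0}^{n}\binom{n}{k}\alpha_kt^k$ with $\alpha_k=a_k/\binom{n}{k}$, note that $\Gamma[(1+t)^n]=\sum_{k=0}^{n}\binom{n}{k}\gamma_kt^k$, and apply the Schur--Szeg\H{o} composition theorem (one composand real-rooted, the other real-rooted with zeros of a single sign) to conclude that $\sum_{k=0}^{n}\binom{n}{k}\alpha_k\gamma_kt^k=\Gamma[f(t)]$ has only real zeros; the case $\deg f<n$ follows by the limiting (Hurwitz) argument you mention. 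Your middle paragraph about the two-variable symbol $\Gamma[(x+ty)^n]$ is an unnecessary detour once the composition theorem is in hand.

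Two cautions. First, you underestimate the ``only if'' direction: that $\Gamma[(1+t)^n]$ is real-rooted is indeed immediate from the definition, but that its zeros all share a sign is a genuine (if classical) argument, not a formality. One route: applying $\Gamma$ to the real-rooted polynomials $t^k(t^2-1)$ for $0\le k\le n-2$ forces $\gamma_k\gamma_{k+2}\ge 0$ for all $k$, which pins the coefficient sign pattern of $\Gamma[(1+t)^n]$ to ``all equal or alternating'' and hence its zeros to one sign; sequences with internal zero entries (e.g.\ the pattern $+,0,0,-$) require a further test polynomial such as $t^k(1+t)^3$. Second, when invoking the composition theorem you must place the same-sign hypothesis on the factor $\Gamma[(1+t)^n]$, not on $f$: the theorem fails with the roles reversed, and this is precisely where the same-sign hypothesis of the statement enters. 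Your phrasing ``two real-rooted degree-$n$ polynomials with zeros of prescribed sign'' blurs this asymmetry and should be corrected in any written-out version.
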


Now we can prove Theorem \ref{thm-uklroot}. By Theorem \ref{thm-ukl3}, we see that
\begin{align}\label{eq-ukl3-reform}
P_{U_{m,d}}(t)=\frac{1}{\binom{d+2 m}{m}}\sum_{i=0}^{\lfloor \frac{d-1}{2}\rfloor} {
\binom{d+2 m}{i+m}\binom{d-i-1}{i} f_{m}(d,i)t^i},
\end{align}
where
\begin{align}
f_{m}(d,i)=\sum _{h=0}^{m-1} \frac{1}{(m-h) \binom{m}{h}} {\binom{i+m}{m-h-1} \binom{i-1+h}{h} \binom{d-i+h}{h}}.\label{eq-klpoldi}
\end{align}
Note that $i$ can take any integer value between $0$ and $d$ in the above formula, namely, $f_{m}(d,i)$ is well defined for $0\leq i\leq d$. Moreover, it is straightforward to compute that
\begin{align*}
f_{m}(d,d)&=\sum _{h=0}^{m-1} \frac{1}{(m-h) \binom{m}{h}} {\binom{d+m}{m-h-1} \binom{d-1+h}{h}}\\
&=\sum _{h=0}^{m-1} \frac{h!(m-h-1)!}{m!}\cdot\frac{(m+d)!}{(m-h-1)!(d+h+1)!}
\cdot\frac{(d+h-1)!}{(d-1)!h!}\\
&=\frac{(m+d)!}{m!(d-1)!}\sum _{h=0}^{m-1} \frac{1}{(d+h)(d+h+1)}\\
&=\frac{(m+d)!}{m!(d-1)!}\left(\frac{1}{d}-\frac{1}{d+m}\right)\\
&=\frac{(m+d)!}{m!(d-1)!}\cdot\frac{m}{d(d+m)}\\
&=\binom{m+d-1}{m-1}.
\end{align*}

To prove Theorem \ref{thm-uklroot}, we first establish the following result.

\begin{lem}\label{lem-uklroots}
For any  $m$ and $d$, the polynomial
\begin{align}
\frac{1}{\binom{d+2 m}{m}}\sum_{i=0}^{\lfloor \frac{d-1}{2}\rfloor} {
\binom{d+2 m}{i+m}\binom{d-i-1}{i}  t^i},
\end{align}
has only real zeros.
\end{lem}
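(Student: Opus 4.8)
The plan is to recognize the coefficient sequence $\gamma_i=\binom{d+2m}{i+m}$ as a multiplier sequence (Lemma \ref{ms-u}) and to identify the polynomial $\sum_i \binom{d-i-1}{i}t^i$ as something whose real-rootedness is classically known, so that applying the multiplier sequence preserves real-rootedness. The overall constant $1/\binom{d+2m}{m}$ is positive and irrelevant to the location of the zeros, so I would discard it immediately.

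First I would focus on the ``seed'' polynomial
\begin{align*}
g(t)=\sum_{i=0}^{\lfloor (d-1)/2\rfloor}\binom{d-i-1}{i}t^i.
\end{align*}
The key observation is that these are Fibonacci-type binomial coefficients: the polynomials $\sum_i \binom{n-i}{i}t^i$ are (up to normalization and a change of variable) Chebyshev-like / Fibonacci polynomials, and it is a classical fact that they have only real, negative zeros. Concretely, one can verify that $g(t)$ satisfies a three-term recurrence in $d$ of the form $g_{d}(t)=g_{d-1}(t)+t\,g_{d-2}(t)$, which is exactly the recurrence for a sequence of orthogonal-type polynomials; such a recurrence with positive ``$t$-weight'' forces interlacing of zeros and hence real-rootedness with all zeros negative. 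Thus I would prove that $g(t)$ has only real (indeed negative) zeros, most cleanly by induction on $d$ using the recurrence and a standard interlacing argument, or by directly exhibiting the substitution $t=-s^2/4$ (or similar) that turns $g$ into a Chebyshev polynomial whose real-rootedness is known.

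Second, with $g(t)$ real-rooted in hand, I would apply the multiplier sequence $\Gamma=\{\binom{d+2m}{i+m}\}_{i=0}^\infty$ from Lemma \ref{ms-u}. By the very definition of a multiplier sequence, since $g(t)=\sum_i a_i t^i$ (with $a_i=\binom{d-i-1}{i}$) has only real zeros, the transformed polynomial $\Gamma[g(t)]=\sum_i \binom{d+2m}{i+m}\binom{d-i-1}{i}t^i$ also has only real zeros. Dividing by the positive constant $\binom{d+2m}{m}$ does not change the zeros, which gives exactly the polynomial in the statement and completes the argument.

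The main obstacle I anticipate is the first step: cleanly establishing that the seed polynomial $\sum_i\binom{d-i-1}{i}t^i$ has only real zeros. The multiplier-sequence machinery then does all the remaining work for free, so the crux is really a classical real-rootedness fact about these Fibonacci binomial sums. I would either locate it in the literature on Fibonacci/Chebyshev polynomials and cite it, or give a short self-contained induction via the three-term recurrence $g_d=g_{d-1}+t\,g_{d-2}$, checking the base cases and using interlacing to propagate real-rootedness and negativity of the zeros; one minor technical point to handle carefully is that the upper summation limit $\lfloor(d-1)/2\rfloor$ shifts as $d$ changes parity, so the inductive bookkeeping should track the degree of $g_d$ precisely.
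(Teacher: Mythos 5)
Your proposal is correct and matches the paper's proof: both reduce the statement via the multiplier sequence of Lemma \ref{ms-u} to the real-rootedness of the seed polynomial $\sum_i \binom{d-i-1}{i}t^i$, which the paper handles by citing the classical fact that the Fibonacci polynomial $F_d(t)=\sum_i \binom{d-i-1}{i}t^{d-2i-1}$ has only pure imaginary zeros (your substitution $t\mapsto$ negative square is exactly this connection). Your alternative of proving the seed polynomial's real-rootedness by induction on the recurrence $g_d=g_{d-1}+t\,g_{d-2}$ with interlacing is a viable self-contained substitute for that citation, but the overall route is the same.
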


\begin{proof}
By Lemma \ref{ms-u} it suffices to show that
\begin{align}
\sum_{i=0}^{\lfloor \frac{d-1}{2}\rfloor} {
\binom{d-i-1}{i}  t^i},
\end{align}
has only real zeros. This is true since
the $d$-th  Fibonacci  polynomial
 $$F_d(t)=\sum_{i=0}^{\lfloor \frac{d-1}{2} \rfloor} \binom{d-i-1}{i} t^{d-2i-1}$$
has only pure imaginary zeros for $d\geq 3$,
 see \cite{hoggatt1973roots,lachal2013trick}.
\end{proof}

In view of \eqref{eq-ukl3-reform} and Lemma \ref{lem-uklroots},
Theorem \ref{thm-uklroot} will be proved once we show that $\{f_{m}(d,i)\}_{i=0}^{d}$ is a $d$-sequence for $2\leq m\leq 15$. Numerical evidence suggests the following conjecture.

\begin{conj}\label{conj-ukl}
 For any $m,d$, and $0\leq i\leq d$, let $f_{m}(d,i)$ be defined as in \eqref{eq-klpoldi}. Then $\{f_{m}(d,i)\}_{i=0}^{d}$ is a $d$-sequence.
\end{conj}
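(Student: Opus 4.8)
The plan is to apply the characterization of $d$-sequences in Theorem~\ref{n-char}. By that result, $\{f_m(d,i)\}_{i=0}^d$ is a $d$-sequence if and only if
\[
Q_{m,d}(t):=\sum_{i=0}^{d}\binom{d}{i}f_m(d,i)\,t^i
\]
has only real zeros, all of the same sign. My first observation is that the \emph{same sign} requirement is automatic here. The formula \eqref{eq-klpoldi} exhibits $f_m(d,i)$ as a sum of nonnegative terms for $0\le i\le d$, so every coefficient of $Q_{m,d}(t)$ is positive (indeed $Q_{m,d}(0)=f_m(d,0)=1$). Hence $Q_{m,d}(t)>0$ for all $t\ge 0$, every real zero is negative, and the whole problem reduces to showing that $Q_{m,d}(t)$ is \emph{real-rooted}.

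Next I would simplify $f_m(d,i)$ before feeding it into $Q_{m,d}$. Clearing the factorials in \eqref{eq-klpoldi} and extracting the factors independent of the summation index $h$ collapses the weight $\tfrac{1}{(m-h)\binom{m}{h}}\binom{i+m}{m-h-1}\binom{i-1+h}{h}$ to $\tfrac{1}{(i+h)(i+h+1)}$ times a binomial prefactor, yielding (for $i\ge 1$, with $f_m(d,0)=1$)
\[
f_m(d,i)=i\binom{i+m}{m}\sum_{h=0}^{m-1}\frac{1}{(i+h)(i+h+1)}\binom{d-i+h}{h}.
\]
This already recovers $f_1(d,i)=1$, so that $Q_{1,d}(t)=(1+t)^d$, matching the Narayana base case of Lemma~\ref{lem-uklroots}. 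Substituting this form into $Q_{m,d}(t)$ and interchanging the order of summation leaves, for each $h$, an inner sum of hypergeometric type. The goal of this step is to use creative telescoping (Zeilberger's algorithm, as elsewhere in the paper) to collapse $Q_{m,d}(t)$ into a single hypergeometric polynomial and then to recognize it, after a positive scaling and an affine change of variable, as a classical real-rooted family; a shifted Jacobi polynomial of type ${}_2F_1(-d,\,\cdot\,;\,\cdot\,;\,\cdot\,)$ is the natural candidate, since such polynomials have only real zeros.

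As a parallel route I would attempt induction on $m$. The base case $m=1$ is the Narayana/Fibonacci case already settled in Lemma~\ref{lem-uklroots}, and passing from $m$ to $m+1$ changes $f_m(d,i)$ by a single additional term of the shape above. One may then hope to realize the transition $Q_{m,d}\to Q_{m+1,d}$ as a composition of multiplier-sequence and $d$-sequence operations in the spirit of Lemma~\ref{ms-u}, or, failing that, to derive a recurrence in $d$ for $Q_{m,d}(t)$ by creative telescoping and argue that it propagates real-rootedness with interlacing of zeros.

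The main obstacle is the genuinely two-parameter nature of the problem. For each fixed $m$ the coefficients of $Q_{m,d}(t)$ are themselves polynomials in $d$, so real-rootedness must be controlled uniformly as $d\to\infty$, and there is no evident three-term recurrence carrying the sign pattern that Sturm-sequence or interlacing arguments demand. Even the hypergeometric-collapse route is delicate: the weight $\tfrac{1}{(i+h)(i+h+1)}$ obstructs a clean single-${}_2F_1$ evaluation, so matching $Q_{m,d}(t)$ to an honest real-rooted polynomial uniformly in $m$ is precisely the hard point, which is consistent with the authors having verified the statement only for $2\le m\le 15$.
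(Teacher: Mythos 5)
Your proposal is not a proof, and the gap is structural rather than a fixable detail: after the reduction via Theorem~\ref{n-char}, both routes you offer --- collapsing $Q_{m,d}(t)$ to a Jacobi-type ${}_2F_1$ and inducting on $m$ --- are left as programs, and you yourself concede that each one breaks down at the decisive step (no clean hypergeometric evaluation, no recurrence that propagates real-rootedness). To be clear about what you do get right: the statement is a conjecture that the paper also leaves open, your reduction to the real-rootedness of $Q_{m,d}(t)=\sum_{i=0}^{d}\binom{d}{i}f_m(d,i)\,t^i$ (with the same-sign condition automatic from positivity of the coefficients) coincides with the paper's first move, and your simplification
\[
f_m(d,i)=i\binom{i+m}{m}\sum_{h=0}^{m-1}\frac{1}{(i+h)(i+h+1)}\binom{d-i+h}{h}\qquad(i\ge 1)
\]
is a correct rewriting of \eqref{eq-klpoldi}. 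But no real-rootedness certificate is ever produced, so nothing beyond the reduction is established.

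The idea you are missing --- the one that lets the paper prove the cases $2\le m\le 15$ in Theorem~\ref{thm-dsequence} and that directly answers your uniformity-in-$d$ worry --- is a degree collapse via falling factorials. For fixed $m$, $f_m(d,i)$ is a polynomial in $i$ of degree $2(m-1)$, so one writes $f_m(d,i)=\sum_{k=0}^{2(m-1)}g_{m,k}(d)\,(i)_k$ and uses $\sum_{i}(i)_k\binom{d}{i}t^i=(d)_k\,t^k(1+t)^{d-k}$ to obtain
\[
Q_{m,d}(t)=(1+t)^d\,G_{m,d}\!\left(\frac{t}{1+t}\right),
\qquad
G_{m,d}(s)=\sum_{k=0}^{2(m-1)}g_{m,k}(d)\,(d)_k\,s^k .
\]
Since the M\"obius map $t\mapsto t/(1+t)$ preserves realness of zeros and $G_{m,d}(1)=f_m(d,d)=\binom{d+m-1}{m-1}\neq 0$, the problem for each fixed $m$ becomes the real-rootedness of a polynomial of degree $2(m-1)$ \emph{independent of $d$}, with $d$ appearing only in the coefficients. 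The paper then certifies this by the Borchardt--Hermite criterion (Theorem~\ref{rz-criterion}): each Hurwitz determinant $\Delta_{2k}(G_{m,d},G'_{m,d})$ is shown to be a polynomial in $d'=d-2(m-1)$ with positive coefficients, a verification done by computer algebra for $3\le m\le 15$. This is precisely why the paper's result stops at $m=15$: the certificate is computed separately for each $m$, and nobody has a positivity argument for the Hurwitz determinants that is uniform in $m$. So even with the missing idea supplied, your proposal could at best reproduce the paper's partial theorem, not the full conjecture.
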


Note that if Conjecture \ref{conj-ukl} is true, then the real-rootedness of the \KL polynomials $P_{U_{m,d}}(t)$ will immediately follow from Lemma \ref{lem-uklroots}. While Conjecture \ref{conj-ukl} is generally open, we can prove the following result.

\begin{thm}\label{thm-dsequence}
For any $d$ and $2\leq m\leq 15$, the sequence $\{f_{m}(d,i)\}_{i=0}^{d}$ is a $d$-sequence.
\end{thm}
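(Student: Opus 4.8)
The plan is to prove that $\{f_{m}(d,i)\}_{i=0}^{d}$ is a $d$-sequence by appealing directly to the algebraic characterization in Theorem \ref{n-char}. By that criterion, it suffices to show that for each fixed $m$ with $2\leq m\leq 15$, the polynomial
\begin{align*}
G_{m,d}(t)=\sum_{i=0}^{d}\binom{d}{i}f_{m}(d,i)\,t^i
\end{align*}
has only real zeros, all of the same sign, for every $d$. The key structural observation I would exploit is that $f_{m}(d,i)$ is a \emph{polynomial in $i$ of degree $2(m-1)$ with coefficients depending on $d$}: inspecting \eqref{eq-klpoldi}, each summand is the product of $\binom{i+m}{m-h-1}$, $\binom{i-1+h}{h}$, and $\binom{d-i+h}{h}$, which are polynomials in $i$ of degrees $m-h-1$, $h$, and $h$ respectively, so the total $i$-degree is $m-1$ in the worst term. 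Since $m\leq 15$ is bounded, the whole problem reduces to a \emph{finite} family of polynomial-coefficient sequences, and this is precisely what makes a case-by-case computational verification feasible.

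First I would render $f_m(d,i)$ as an explicit polynomial $f_m(d,i)=\sum_{k} a_{m,k}(d)\,i^k$ by expanding the finite sum over $h$ for each fixed $m$ (this is a routine symbolic computation in a computer algebra system). Next I would substitute $f_{m}(d,i)=\sum_k a_{m,k}(d)\,i^k$ into $G_{m,d}(t)$ and use the fact that applying the operator $i\mapsto i$ to $\binom{d}{i}$ corresponds to the Eulerian-type differential operators $\theta=t\frac{d}{dt}$ acting on $(1+t)^d$; concretely, $\sum_i \binom{d}{i} i^k t^i = (\theta^k)(1+t)^d$. This means $G_{m,d}(t)=\sum_k a_{m,k}(d)\,\theta^k(1+t)^d$, which can be written as $(1+t)^{d-2(m-1)}$ times a polynomial whose coefficients are polynomials in $d$ and $t$. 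The plan would then be to show that for each of the fourteen values $m=2,\dots,15$, this residual factor has only real nonpositive zeros for all $d$, reducing real-rootedness of $G_{m,d}$ to real-rootedness of a single bounded-degree polynomial whose dependence on $d$ is explicit.

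The hard part, I expect, will be handling the parameter $d$ uniformly: the residual polynomial factor has degree $2(m-1)$ in $t$ but its coefficients are themselves polynomials in $d$, so one cannot simply check finitely many cases of $d$. Here I would try to establish real-rootedness by exhibiting the residual factor as a product of an $n$-sequence operator applied to an already-real-rooted polynomial, or by showing that its discriminant (a polynomial in $d$) is manifestly nonnegative for all $d\geq 1$, perhaps by writing $d$ as $d=2m+r$ and checking positivity of the resulting polynomial coefficients. An alternative and probably cleaner route would be to verify the sign-and-reality condition of Theorem \ref{n-char} directly by computing $\Gamma[(1+t)^n]$ for a generic symbolic $d$ and each $m\leq 15$, and confirming via a Sturm-sequence or resultant computation that all zeros are real and negative; since $m$ is bounded this is a finite list of verifiable symbolic identities in $d$. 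I would therefore organize the proof as fourteen explicit lemmas (one per $m$), each reducing to a single real-rootedness statement about a polynomial of degree $2(m-1)$ with polynomial-in-$d$ coefficients, and discharge each by the computer-algebra verification that the paper's methodology (Zeilberger's algorithm and \textit{fastZeil}) already relies upon.
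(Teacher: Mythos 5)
Your overall route is the same as the paper's: reduce via Theorem \ref{n-char} to the real-rootedness of $\sum_{i=0}^{d}\binom{d}{i}f_m(d,i)\,t^i$, use the fact that $f_m(d,i)$ is a polynomial in $i$ of degree $2(m-1)$ to factor out $(1+t)^{d-2(m-1)}$, and then certify the residual degree-$2(m-1)$ factor uniformly in $d$ by a symbolic computation with a shifted parameter (the paper expands in falling factorials $(i)_k$ rather than powers $i^k$, which avoids Stirling numbers but is only a cosmetic difference). However, your certification step has a genuine gap: nonnegativity of the discriminant does \emph{not} imply real-rootedness once the degree exceeds $3$. For a quartic, positive discriminant is equally consistent with zero real roots (e.g.\ $t^4+1$), and here the residual factor has degree $2(m-1)\ge 4$ for every $m\ge 3$, i.e.\ for nearly the whole range $2\le m\le 15$. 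A complete certificate is required; the paper uses the Borchardt--Hermite criterion (Theorem \ref{rz-criterion}), proving positivity of all $2(m-1)$ Hurwitz determinants $\Delta_{2k}\left(G_{m,d}(t),G_{m,d}'(t)\right)$ by showing that, after the substitution $d'=d-2(m-1)\ge 0$, each determinant is a polynomial in $d'$ with positive coefficients. Your alternative suggestion (Sturm sequences or resultants) could in principle play this role, but you leave it undeveloped: the sign conditions arising there are again polynomials in $d$, and you would still need exactly this shift-and-check-positivity argument for each of them.

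There is a second gap: you never treat the degenerate range $d<2(m-1)$. There $(d)_k=0$ for $k>d$, so the residual factor drops degree and its nominal leading coefficient vanishes; any criterion asserting ``$2(m-1)$ distinct real zeros'' (discriminant, Hermite determinants, or a generic Sturm computation) simply does not apply. The paper disposes of these finitely many values of $d$ for each $m$ by direct verification, and your argument needs the same case split. Finally, a small arithmetic slip in your justification: the $h$-th summand in \eqref{eq-klpoldi} has $i$-degree $(m-h-1)+h+h=m-1+h$, not $m-1$; the degree $2(m-1)$ you claim (correctly) is attained at $h=m-1$.
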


Next we will provide a general approach to prove Theorem \ref{thm-dsequence}. Theoretically, our approach is applicable for any given $m$. However, we would need more and more computer time as $m$ becomes larger. (On a  Ubuntu  server  with  Intel  Xeon  CPU  E5-2640 v3 (2.60GHz), it takes 9 hours calculation for $m=14$ and 20 hours for $m=15$.)

By Theorem \ref{n-char}, in order to prove \ref{thm-dsequence}, it suffices to show that
\begin{align}\label{eq-realroots}
Q_d(t)=\sum_{i=0}^{d} f_{m}(d,i) \binom{d}{i} t^i
\end{align}
has only nonnegative zeros for $2\leq m\leq 15$.
By \eqref{eq-klpoldi} it is easy to see that $f_{m}(d,i)$ can be considered as a polynomial in $i$ and $d$ for any fixed $m$, and
as a polynomial in $i$ the polynomial $f_{m}(d,i)$ is of degree
$2(m-1)$ with leading coefficients $\frac{(-1)^{m-1}}{(m-1)! m!}$ and constant term $1$. Thus, we may express $f_{m}(d,i)$ in terms of the falling factorials, say
\begin{align}\label{eq-fg}
f_{m}(d,i)=\sum_{k=0}^{2(m-1)} g_{m,k}(d) (i)_k,
\end{align}
where $g_{m,k}(d)$  is a polynomial of $d$ and
$(i)_k=i(i-1)\cdots(i-k+1)$.
It is easy to see that $g_{m,0}(d)=1$ and $g_{m,2(m-1)}(d)=\frac{(-1)^{m-1}}{(m-1)! m!}$.
Combining \eqref{eq-realroots} and \eqref{eq-fg},
we obtain that
\begin{align*}
Q_d(t)&=\sum_{i=0}^{d} \left(\sum_{k=0}^{2(m-1)} g_{m,k}(d)  (i)_k \right)\binom{d}{i} t^i\\
&=\sum_{k=0}^{2(m-1)} g_{m,k}(d) \left(\sum_{i=0}^{d}  (i)_k   \binom{d}{i} t^i\right)\\
&=\sum_{k=0}^{2(m-1)} g_{m,k}(d) t^k  ( (1+t)^d)^{(k)}\\
&=\sum_{k=0}^{2(m-1)} g_{m,k}(d)   (d)_k    t^k (1+t)^{d-k}\\
&=(1+t)^d \sum_{k=0}^{2(m-1)} g_{m,k}(d) (d)_k \left(\frac{t}{1+t}\right)^k.
\end{align*}
Therefore, to prove Theorem \ref{thm-dsequence} it suffices to show that
\begin{align}\label{eq-gdt}
G_{m,d}(t)=\sum_{k=0}^{2(m-1)} g_{m,k}(d) (d)_k t^k
\end{align}
has only real zeros for $2\leq m\leq 15$. Note that
$G_{m,d}(1)\neq 0$ since, by \eqref{eq-fg}, we have
$$G_{m,d}(1)=\sum_{k=0}^{2(m-1)} g_{m,k}(d) (d)_k 1^k=f_{m}(d,d)=\binom{d+m-1}{m-1}>0.$$

We proceed to show our strategy to prove the real-rootedness of
$G_{m,d}(t)$ defined by \eqref{eq-gdt}. For given $m$ and $d<2(m-1)$,
we can directly  verify the real-rootedness of $G_{m,d}(t)$ when $d<2(m-1)$ with the  help of a computer algebra system. When $d\geq 2(m-1)$ for fixed $m$, we  will prove that $G_{m,d}(t)$ is of degree $2(m-1)$ and it has $2(m-1)$ distinct real zeros. The former is obvious, and
the latter can be proved via a criterion  due to Borchardt and  Hermite, we will recall below, see the discussions in \cite[pp. 349]{Rahman2002Analytic}.  Suppose that
$A(t)=\sum_{i=0}^{n}a_{n-i}t^{i}$ and $B(t)=\sum_{i=0}^{n}b_{n-i}t^{i}$
are two polynomials with $a_0\ne 0$. For any $1\leq k\leq n$, let
\begin{align*}
  \Delta_{2 k}\left( A(t),B(t) \right)= \det \left(
  \begin{array}{ccccc}
   a_0 & a_1 & a_2 & \dots & a_{2k-1}\\
   b_0 & b_1 & b_2 & \dots & b_{2k-1}\\
    0  & a_0 & a_1 & \dots & a_{2k-2}\\
    0  & b_0 & b_1 & \dots & b_{2k-2}\\
  \vdots&\vdots&\vdots&  & \vdots\\
    0  &  0  &  0  & \dots & b_{k}\\
  \end{array}\right)_{2k \times 2k}.
\end{align*}
These determinants are known as the Hurwitz determinants of $A(t)$ and $B(t)$. Borchardt and  Hermite's criterion can be stated as follows.

\begin{thm}[{\cite[Corollary 10.6.13]{Rahman2002Analytic}}]\label{rz-criterion}
Suppose that $A(t)$ is a real polynomial of degree $n$ with $a_0\neq 0$. Then
$A(t)$ has $n$ distinct real zeros  if and only if the corresponding Hurwitz determinants satisfy
\begin{align}\label{Hurwitz_det}
\Delta_{2k}(A(t),A'(t))>0, \quad \text{for every } 1\leq k \leq n.
\end{align}
\end{thm}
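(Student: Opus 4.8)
The plan is to prove Theorem~\ref{rz-criterion}, the Borchardt--Hermite criterion, which asserts that a real polynomial $A(t)$ of degree $n$ with $a_0\neq 0$ has $n$ distinct real zeros if and only if $\Delta_{2k}(A(t),A'(t))>0$ for every $1\leq k\leq n$. Since this is quoted as a known corollary from \cite{Rahman2002Analytic}, my aim is to reconstruct its proof from the classical theory of the Hurwitz--Hermite--Biehler circle of ideas together with the Hermite--Sylvester theory of quadratic forms associated to resultants. The central object is the Bezoutian (or the closely related Hankel matrix built from Newton-power-sum data) of the pair $(A,A')$, whose signature counts real zeros and whose leading principal minors are, up to known positive factors, the Hurwitz determinants $\Delta_{2k}$.

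First I would set up the connection between the number of \emph{distinct} real zeros and a symmetric bilinear form. For a real polynomial $A$ of degree $n$, consider the Hermite quadratic form $H_A$ whose matrix is the Hankel matrix $(s_{p+q})_{0\le p,q\le n-1}$, where $s_r=\sum_j \theta_j^{\,r}$ are the power sums of the roots $\theta_j$ of $A$. The classical Hermite theorem states that the signature of $H_A$ equals the number of distinct real roots, while its rank equals the total number of distinct roots; in particular $A$ has $n$ distinct real roots exactly when $H_A$ is positive definite. Thus the key reduction is: \emph{$A$ has $n$ distinct real zeros if and only if $H_A\succ 0$, which by Sylvester's criterion holds if and only if all leading principal minors of the Hankel matrix are positive.}

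The second step is the algebraic identity that converts these Hankel (Newton--Hermite) minors into the Hurwitz determinants $\Delta_{2k}(A,A')$. This is the heart of the matter: one expresses the power sums $s_r$ through the logarithmic derivative $A'/A=\sum_{r\ge 0} s_r\, t^{-r-1}$, and then shows that the Hankel determinants of $(s_{p+q})$ coincide, up to an explicitly positive power of $a_0$, with the even-order Hurwitz determinants $\Delta_{2k}(A,A')$ formed from the interleaved coefficients of $A$ and $A'=B$. The cleanest route is to invoke the Bezoutian: the Bezout matrix $\mathrm{Bez}(A,A')$ is congruent to the Hankel matrix $(s_{p+q})$ via the (unipotent, determinant-one) change of basis given by the coefficients of $A$, so their leading principal minors agree; and a further standard manipulation of the Bezoutian into the Sylvester-type resultant layout reproduces precisely the banded determinant displayed in the statement of $\Delta_{2k}$. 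I would carry this out by verifying the congruence $\mathrm{Bez}(A,A')=C^{\top} H_A\, C$ for the appropriate triangular $C$ with unit diagonal, so that $\det$ of each leading block is preserved.

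I expect the main obstacle to be precisely this second step: matching the \emph{indexing and the banded structure} of the Hurwitz determinant $\Delta_{2k}$ in the displayed matrix to the leading principal minors of the Hankel/Bezout form, keeping careful track of the positive normalizing factors (powers of $a_0$) and of the sign conventions so that the equivalence ``$\Delta_{2k}>0$ for all $k$'' matches ``$H_A\succ0$'' exactly, with no spurious sign flips. Once the congruence and the determinantal dictionary are established, the forward and reverse implications follow immediately from Sylvester's criterion applied to $H_A$, and the restriction to the specific pair $B=A'$ guarantees that the relevant resultant/discriminant data are real and that positive definiteness corresponds to $n$ distinct real zeros rather than merely $n$ distinct zeros. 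I would then conclude by remarking that in our application $A(t)=G_{m,d}(t)$ is a genuine polynomial of degree $2(m-1)$ with $a_0=g_{m,2(m-1)}(d)(d)_{2(m-1)}\neq 0$ for $d\ge 2(m-1)$, so the criterion applies verbatim and reduces the real-rootedness of $G_{m,d}$ to the finite determinantal check performed by computer.
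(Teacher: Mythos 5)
The paper offers no proof of Theorem \ref{rz-criterion} to compare against: it is quoted verbatim from Rahman and Schmeisser \cite{Rahman2002Analytic} (Corollary 10.6.13) and used as a black box in the proofs of Theorems \ref{thm-dsequence} and \ref{thm-dsequence-z}. Measured against the classical literature instead, your outline is the standard Hermite--Hurwitz route, and it is the right one: Hermite's theorem (the Hankel form $H_A=(s_{p+q})_{0\le p,q\le n-1}$ of the power sums has rank equal to the number of distinct zeros and signature equal to the number of distinct real zeros), Sylvester's criterion, and then a determinantal dictionary identifying the leading principal minors of $H_A$, up to positive factors, with the banded determinants $\Delta_{2k}(A,A')$. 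This is essentially how the cited reference itself arrives at the result, so in spirit your reconstruction is faithful.

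As a proof, however, your write-up stops exactly where the theorem lives. The dictionary $\Delta_{2k}(A,A')=a_0^{2k}\det(s_{p+q})_{0\le p,q\le k-1}$ is the entire content of the statement --- Hermite plus Sylvester are routine --- and you defer it to ``a further standard manipulation of the Bezoutian into the Sylvester-type resultant layout.'' That manipulation is Hurwitz's theorem on the even-order minors of the interleaved coefficient matrix of $A$ and $B$ in terms of the Hankel minors of the Markov parameters of $B/A$; it must be proved (or at least precisely cited), and it must be matched to the paper's convention in which $A'$ is padded to degree $n$ with $b_0=0$ in the rows of $\Delta_{2k}$. A sanity check of what needs to come out: for $n=2$ one finds $\Delta_2=a_0^2 s_0$ and $\Delta_4=a_0^4\bigl(s_0s_2-s_1^2\bigr)=a_0^4(\theta_1-\theta_2)^2$, confirming the factor $a_0^{2k}$. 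This also exposes a genuine error in your sketch: the congruence matrix $C$ with $\mathrm{Bez}(A,A')=C^{\top}H_A C$ is \emph{not} unipotent unless $A$ is monic; it is triangular with the leading coefficient $a_0$ on its diagonal, so the $k$-th leading minors of the two forms differ by the factor $a_0^{2k}$. The error is harmless once corrected --- the power is even, hence positive --- but it cannot be waved away, and the evenness is precisely why the criterion is insensitive to the sign of the leading coefficient, which matters in the paper's application: for $m=2$ one has $a_0=-\tfrac{1}{2}d(d-1)<0$, yet the criterion is applied without reservation.
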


Now we can prove Theorem \ref{thm-dsequence}.

\begin{proof}[Proof of Theorem \ref{thm-dsequence}.]
Following the preceding arguments, we will take $m=2$ to illustrate our proof.
We first expand $f_{m}(d,i)$ in terms of the falling factorials.
For $m=2$ we have
$$f_{m}(d,i)=1+\frac{(d+2)i}{2}-\frac{i^2}{2}=(i)_0+\frac{d+1}{2}(i)_1-\frac{1}{2}(i)_2.$$
Then we determine the explicit expression of $G_{m,d}(t)$ defined by \eqref{eq-gdt}. In the case of $m=2$, we have
$$G_{m,d}(t)=1+\frac{d+1}{2} (d)_1t-\frac{1}{2}(d)_2t^2=1+\frac{d(d+1)}{2}t-\frac{d(d-1)}{2}t^2.$$
Finally, it remains to prove the real-rootedness of $G_{m,d}(t)$
under each of the following two cases: (i) $d<2(m-1)$; (ii) $d\geq 2(m-1)$.

Let us first consider the case of $d<2(m-1)$.
For $m=2$, such $d$ can only be $1$. In this case, we have
$G_{2,1}(t)=1+t$, which is clearly real-rooted.

We proceed to consider the case of $d\geq 2(m-1)$.
In this case, $G_{m,d}(t)$ is a polynomial in $t$ of degree $2(m-1)$.
By Theorem \ref{rz-criterion}, we need to prove the positivity of $2(m-1)$ Hurwitz determinants $\Delta_{2k}(G_{m,d}(t),G'_{m,d}(t))$ for $1\leq k\leq 2(m-1)$.
To this end, let $d'=d-2(m-1)\geq 0$. It suffices to show that for any $1\leq k\leq 2(m-1)$  the Hurwitz determinant $\Delta_{2k}(G_{m,d}(t),G'_{m,d}(t))$ is a polynomial in $d'$ with positive coefficients. For $m=2$, it is straightforward to compute that
\begin{align*}
\Delta_{2}(G_{2,d}(t),G'_{2,d}(t))&=\det\left(
\begin{array}{cc}
 -\frac{1}{2} (d-1) d & \frac{1}{2} d (d+1) \\
 0 & -(d-1) d \\
\end{array}
\right)\\
&=\frac{1}{2} (d-1)^2 d^2\\
&=\frac{d'^4}{2}+3 d'^3+\frac{13 d'^2}{2}+6 d'+2,
\end{align*}
and
\begin{align*}\Delta_{4}(G_{2,d}(t),G'_{2,d}(t))&=\det \left(
\begin{array}{cccc}
 -\frac{1}{2} (d-1) d & \frac{1}{2} d (d+1) & 1 & 0 \\
 0 & -(d-1) d & \frac{1}{2} d (d+1) & 0 \\
 0 & -\frac{1}{2} (d-1) d & \frac{1}{2} d (d+1) & 1 \\
 0 & 0 & -(d-1) d & \frac{1}{2} d (d+1) \\
\end{array}
\right)\\
&=\frac{1}{16} (d-1)^2 d^3 \left(d^3+2 d^2+9 d-8\right)\\
&=\frac{d'^8}{16}+d'^7+\frac{59 d'^6}{8}+31 d'^5+\frac{1265 d'^4}{16}+124 d'^3+\frac{233 d'^2}{2}+60 d'+13,
\end{align*}
where $d'=d-2\geq 0$.
For larger $m$, both computing the Hurwitz determinants and verifying the positivity of polynomial coefficients can be done with the help of a computer algebra system. For $3\leq m \leq 15$, a similar calculation can be found in \linebreak
\href{https://github.com/mathxie/kl_uniform_matroid}{https://github.com/mathxie/kl\_uniform\_matroid}.
This completes the proof.
\end{proof}

Finally, we are in the position to prove Theorem \ref{thm-uklroot}.

\begin{proof}[Proof of Theorem \ref{thm-uklroot}.]
It immediately follows from Lemma \ref{lem-uklroots} and Theorem \ref{thm-dsequence} in view of the positivity of polynomial coefficients.
\end{proof}

\subsection{Gedeon, Proudfoot and  Young's formula}
The aim of this subsection is to prove  Gedeon, Proudfoot, and  Young's formula \eqref{ukl1}
based on our new formula \eqref{ukl3} for $c_{m,d}^i$.

To be self-contained, we will first recall Gedeon, Proudfoot, and  Young's original proof of \eqref{ukl1}, with more details added here. Given a nonnegative integer $n$, a partition of $n$ is a tuple $\lambda=(\lambda_1,\lambda_2,\ldots,\lambda_k)$ of nonnegative integers such that $\lambda_1\geq\lambda_2\geq\cdots\geq\lambda_k \geq 0$ and $\sum_{i=1}^k\lambda_i=n$, denoted by $\lambda\vdash n$. For any partition $\lambda\vdash n$, let $\ell(\lambda)$ denote the number of its nonzero parts and let $V[\lambda]$ be the irreducible representation of $S_{n}$ indexed by $\lambda$.  Gedeon, Proudfoot, and  Young \cite{gedeon2017equivariant}  obtained the following result.
\begin{thm}[{\cite[Theorem 3.1]{gedeon2017equivariant}}]\label{thm-ueqkl}
For all positive $m$, $d$, and $i$, we have
\begin{align}\label{um_1}
c_{m,d}^i=\sum_{h=1}^{\min(m,d-2i)}\dim V[d+m-2i-h+1,h+1,2^{i-1}].
\end{align}
\end{thm}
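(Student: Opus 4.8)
The plan is to follow Gedeon, Proudfoot, and Young's equivariant strategy and to prove the stronger representation-theoretic statement from which \eqref{um_1} follows by taking dimensions. The relevant symmetry is the action of the symmetric group $S_{m+d}$ on the ground set of $U_{m,d}$, which induces an action on the lattice of flats $L(U_{m,d})$. I would work with the equivariant \KL polynomial $P^{S_{m+d}}_{U_{m,d}}(t)$, whose coefficients are genuine representations of $S_{m+d}$ and whose graded dimension recovers the ordinary polynomial $P_{U_{m,d}}(t)$. Thus it suffices to show that the coefficient of $t^i$ in $P^{S_{m+d}}_{U_{m,d}}(t)$ is isomorphic to $\bigoplus_{h=1}^{\min(m,d-2i)} V[d+m-2i-h+1,\,h+1,\,2^{i-1}]$; evaluating $\dim$ then yields \eqref{um_1}.

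First I would set up the equivariant analogue of the recursion \eqref{uklrec}. The key combinatorial input is that for $0\le k\le d-1$ every rank-$k$ flat of $U_{m,d}$ is a $k$-element subset of the ground set, with localization $U_{0,k}$ (a Boolean matroid) and contraction $U_{m,d-k}$, and that $S_{m+d}$ permutes these flats transitively with stabilizer $S_k\times S_{m+d-k}$, while the top flat contributes the equivariant characteristic polynomial of $U_{m,d}$. The equivariant defining relation then expresses $t^{d}\,P^{S_{m+d}}_{U_{m,d}}(t^{-1})$ as a sum over $k$ of induced representations $\mathrm{Ind}_{S_k\times S_{m+d-k}}^{S_{m+d}}$ of data built from the equivariant characteristic polynomial of the Boolean localization and the equivariant \KL polynomial $P^{S_{m+d-k}}_{U_{m,d-k}}(t)$ of the contraction. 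This turns the problem into a recursion in the single parameter $d$ with $m$ held fixed.

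The main work, which I expect to be the principal obstacle, is to verify by induction on $d$ that the proposed family of representations satisfies this recursion. Concretely, assuming the formula for all smaller rank, one must expand the induced representations together with the equivariant characteristic contributions, collect the coefficient of each power $t^i$, and check that the alternating combination collapses to exactly $\bigoplus_h V[d+m-2i-h+1,h+1,2^{i-1}]$ with no surviving constituents. This is a bookkeeping problem in the representation ring of the symmetric groups: the induction products are governed by the Pieri and Littlewood-Richardson rules, and one has to track which shapes of the form $[a,b,2^{i-1}]$ persist after cancellation, while respecting the degree bound $\deg<\tfrac12 d$. Establishing that all other irreducibles cancel is the delicate step; it amounts to a character identity that can be organized as a telescoping in the parameter $h$.

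Alternatively, since Theorem \ref{thm-ukl3} is already available, one can bypass the equivariant machinery and prove \eqref{um_1} as a purely numerical identity. For this I would compute $\dim V[d+m-2i-h+1,h+1,2^{i-1}]$ directly from the hook length formula; the resulting expression is precisely the summand of \eqref{ukl1} with $e=m+d-i-h$. It then remains to check the single-variable identity equating $\sum_{h=1}^{\min(m,d-2i)}\dim V[\cdots]$ with the manifestly positive right-hand side of \eqref{ukl3}, which can be certified by Zeilberger's algorithm exactly as in the proofs of Theorems \ref{thm-ukl2} and \ref{thm-ukl3}. This route avoids all representation theory, at the cost of one hook-length evaluation and one certified hypergeometric summation.
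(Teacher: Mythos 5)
The paper never actually proves this statement: Theorem \ref{thm-ueqkl} is quoted from Gedeon--Proudfoot--Young \cite{gedeon2017equivariant} and serves as the \emph{input} to the paper's first proof of Theorem \ref{thm-ukl1}. So the real question is how your proposal compares with the proof that the paper's own results make possible. Your first route --- setting up the equivariant recursion and showing by induction that the $i$-th coefficient of the equivariant Kazhdan--Lusztig polynomial is $\bigoplus_h V[d+m-2i-h+1,h+1,2^{i-1}]$ --- is a faithful sketch of the strategy of \cite{gedeon2017equivariant}, and your combinatorial inputs are right (rank-$k$ flats are $k$-subsets, Boolean localizations, contractions $U_{m,d-k}$, stabilizers $S_k\times S_{m+d-k}$). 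But as a proof it has a gap exactly where you flag it: the Pieri/Littlewood--Richardson cancellation showing that only shapes of the form $[a,b,2^{i-1}]$ survive \emph{is} the content of the theorem, and you do not carry it out. Taken alone, this route is a plan, not a proof.

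Your second route, by contrast, is complete in outline and coincides with what the paper itself assembles, run in the opposite direction: the paper's first proof of Theorem \ref{thm-ukl1} verifies via the hook-length formula that $\dim V[d+m-2i-h+1,h+1,2^{i-1}]$ equals the corresponding summand of \eqref{ukl1}, and its second proof derives \eqref{ukl1} from \eqref{ukl3} (which rests on Theorem \ref{thm-ukl2} and the defining recursion \eqref{uklrec}) by Zeilberger's algorithm; chaining these yields \eqref{um_1} with no equivariant input, exactly as you propose. One detail you gloss over: the Zeilberger certificate applies to the sum with upper limit $m$, not $\min(m,d-2i)$, so when $m>d-2i$ you must first show that the terms with $d-2i<h\leq m$ cancel pairwise; the paper does this by the substitution $h\mapsto s-h$ with $s=m+d-2i+1$, under which each summand changes sign. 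With that step supplied, your second route is a correct proof of the identity \eqref{um_1} as stated --- though, unlike the equivariant argument, it establishes only the numerical equality, not the finer statement that the coefficients of the equivariant polynomial are these irreducibles.
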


To derive \eqref{ukl1} from the above theorem, we need to compute $\dim V[d+m-2i-h+1,h+1,2^{i-1}]$ by using the hook-length formula. Let us recall some related definitions.
Each partition $\lambda$ is associated to a left justified array of squares with $\lambda_i$ cells in the $i$-th row, called the Young diagram of $\lambda$. The square in the $i$-th row and $j$-th column is denoted by $(i, j)$.
The hook-length of $(i,j)$, denoted by $h(i,j)$, is defined
to be the number of cells directly to the right or directly below $(i,j)$, counting $(i,j)$ itself once. The well known hook-length formula states that
\begin{align}\label{hook_length}
\dim V[\lambda]=\frac{n!}{\displaystyle{\prod_{ \genfrac{}{}{0pt}{}{1\leq i \leq \ell(\lambda)}  {1\leq j \leq \lambda_i}}}h_{(i,j)}},
\end{align}
see \cite[pp. 50]{fulton2013representation}.

Now one can prove \eqref{ukl1} by using \eqref{um_1} and \eqref{hook_length}.

\begin{proof}[The first proof of Theorem \ref{thm-ukl1}]
It suffices to show that $\dim V[d+m-2i-h+1,h+1,2^{i-1}]$ is equal to
\begin{align*}
\frac{(e-i-h+1)(m+d)!}{e(e+1)(i+h)(i+h-1)(e-i)!(h-1)!i!(i-1)!},
\end{align*}
where $e=m+d-i-h$.
We can check that the hook-lengths of the first row of the partition $(d+m-2i-h+1,h+1,2^{i-1})$ are given by
\begin{align}
h_{(1,j)}=\left\{
\begin{array}{ll}
m+d-i-h+2-j, & \mbox{for $1\leq j\leq 2$;}\\
m+d-2i-h+3-j, & \mbox{for $3\leq j\leq h+1$;}\\
m+d-2i-h+2-j, & \mbox{for $h+2\leq j\leq m+d-2i-h+1$,}
\end{array}
\right.\label{eq-hooklengths}
\end{align}
%
%
the hook-lengths of the second row of the partition $(d+m-2i-h+1,h+1,2^{i-1})$ are given by
\begin{align}
h_{(2,j)}=\left\{
\begin{array}{ll}
h+i-j+1, & \mbox{for $1\leq j\leq 2$;}\\
h+2-j, & \mbox{for $3\leq j\leq h+1$,}
\end{array}
\right.
\end{align}
and the hook-lengths of the last $i-1$ rows of the partition $(d+m-2i-h+1,h+1,2^{i-1})$ are given by
\begin{align*}
h_{(k,j)}=i-k-j+4 \mbox{\quad for $3\leq k\leq i+1$ and $1\leq j\leq 2$.}
\end{align*}
It is routine to verify that
\begin{align}
\displaystyle{\prod_{1\leq j\leq m+d-2i-h+1}}h_{(1,j)}
&=(m+d-i-h)(m+d-i-h+1)\times \frac{(m+d-2i-h)!}{(m+d-2i-2h+1)}\label{u-hooks-1};\\
\displaystyle{\prod_{1\leq j\leq b+1}}h_{(2,j)}&=(i+h)(i+h-1)\times (h-1)!\label{u-hooks-2};\\
\displaystyle{\prod_{3\leq k \leq i+1, 1\leq j\leq 2}}h_{(k,j)}&=i!(i-1)!.\label{u-hooks-3}
\end{align}
Therefore, by \eqref{hook_length},
$\dim V[d+m-2i-h+1,h+1,2^{i-1}]$ is equal to
\begin{align*}
\frac{(m+d)!}{\displaystyle{\prod_{1\leq j\leq m+d-2i-b+1}}h_{(1,j)}\times \displaystyle{\prod_{1\leq j\leq b+1}}h_{(2,j)} \times \displaystyle{\prod_{3\leq k \leq i+1, 1\leq j\leq 2}}h_{(k,j)}}.
\end{align*}
Substituting \eqref{u-hooks-1},\eqref{u-hooks-2} and  \eqref{u-hooks-3} into the above identity, we obtain the desired result. This completes the proof.
\end{proof}

As remarked by Gedeon, Proudfoot, and  Young, after they figured out the formula \eqref{ukl1}, they attempted to prove this formula directly but failed. In the following we shall use
\eqref{ukl3} to prove \eqref{ukl1}. Since \eqref{ukl3} is derived
from \eqref{ukl2} while \eqref{ukl2} can be proved by \eqref{uklrec}, the following proof provides a direct way to prove \eqref{ukl1}.

\begin{proof}[The second proof of Theorem \ref{thm-ukl1}]
First, we claim that the upper bound $\min(m,d-2i)$ of summation in \eqref{ukl1} can be replaced with $m$, namely
\begin{align}\label{ukl1-another form}
c_{m,d}^i=\sum_{h=1}^{m}&
\frac{(e-i-h+1)(m+d)!}{e(e+1)(i+h)(i+h-1)(e-i)!(h-1)!i!(i-1)!},
\end{align}
where  $e=m+d-i-h$.
If  $m\leq d-2i$, then $\min(m,d-2i)=m$, and the claim is clearly true.
For $m\geq d-2i+1$, it suffices to show that
\begin{align*}
\sum_{h=d-2i+1}^{m}&
\frac{(e-i-h+1)(m+d)!}{e(e+1)(i+h)(i+h-1)(e-i)!(h-1)!i!(i-1)!}=0,
\end{align*}
or equivalently,
\begin{align*}
\sum_{h=d-2i+1}^{m}&
\frac{(e-i-h+1)}{e(e+1)(i+h)(i+h-1)(e-i)!(h-1)!}=0,
\end{align*}
where  $e=m+d-i-h$.
Letting $s=m+(d-2i+1)$, the above identity becomes
\begin{align*}
\sum_{h=d-2i+1}^{m} \frac{((s-h) -h)}{(i+s-h)(i+h) \times (i-1+(s-h))(i-1+h)\times (-1+s-h)!(-1+h)!}=0,
\end{align*}
which is obviously true since the interchange of $h$ with $s-h$ just changes the summand to its opposite value.

Note that \eqref{ukl1-another form} can be rewritten as
\begin{align*}
c_{m,d}^i
&=\binom{d+m}{i} \sum_{h=1}^{m}
\frac{(e-i-h+1)(m+d-i)!}{e(e+1)(i+h)(i+h-1)(e-i)!(h-1)!(i-1)!},
\end{align*}
where  $e=m+d-i-h$.
Denote by $f_{m,d}^i$ the summation on the right hand side, namely $f_{m,d}^i=c_{m,d}^i/\binom{d+m}{i}$.
By \eqref{ukl3} it suffices to show that
\begin{align*}
f_{m,d}^i={\frac{1}{d-i} {\sum _{h=0}^{m-1}\binom{d-i+h}{h+i+1}\binom{i-1+h}{h} }},
\end{align*}
or equivalently
\begin{align}\label{eq-mid1}
f_{m+1,d}^i-f_{m,d}^i=\frac{1}{d-i}\binom{d-i+m}{i+m+1}\binom{i+m-1}{m}
\end{align}
with
\begin{align}\label{eq-mid2}
f_{1,d}^i=\frac{1}{d-i}\binom{d-i}{i+1}.
\end{align}
The latter \eqref{eq-mid2} is obvious and the former  \eqref{eq-mid1} can be proved by using Zeilberger's algorithm along the following lines:

\begin{mma}
\In e=m+d-i-h;\\
\end{mma}
\begin{mma}
\In |Zb|[\frac{(e-i-h+1)(m+d-i)!}{e(e+1)(i+h)(i+h-1)(e-i)!(h-1)!(i-1)!},\{h,1,m\},m,1]\\
\end{mma}
\begin{mma}
\In |FunctionExpand|[\%]/.|Gamma|[n\_]\to(n-1)!\\
\Out \left\{-|SUM|[m]+|SUM|[1+m]==\frac{(d-i+m)!}{(d-i) (i+m) (i+m+1) (i-1)! m! (d-2 i-1)!} \right\}\\
\end{mma}

As indicated above, we have
\begin{align*}
f_{m+1,d}^i-f_{m,d}^i&=\frac{(d-i+m)!}{(d-i) (i+m) (i+m+1) (i-1)! m! (d-2 i-1)!}\\
&=\frac{1}{d-i}\binom{d-i+m}{i+m+1}\binom{i+m-1}{m},
\end{align*}
as desired. This completes the proof.
\end{proof}

\section{The \texorpdfstring{$Z$}{Z}-polynomials}\label{sec:uz}

The aim of this section is to prove Theorems \ref{thm-uz1} and \ref{thm-uzroot} based on Theorem \ref{thm-ukl2}.

\subsection{Polynomial coefficients}
In this subsection we will give a proof of Theorems \ref{thm-uz1}. Before that, let us first prove the following result.
\begin{thm}\label{uz2}
For any $m,d$, and $0\leq i \leq d-1$, we have
\begin{align}
z_{m,d}^i=\frac{\binom{d+2 m}{i+m}\binom{d }{i}}{\binom{d+2 m}{m}}\sum_{h=1}^{m}\frac{(-1)^{h+1} h }{m} \binom{i+m}{m-h} \binom{d-i-h+m-1}{m-1}
\end{align}
with $z_{m,d}^d=1$.
\end{thm}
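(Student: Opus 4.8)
The plan is to read off the coefficient $z_{m,d}^i$ directly from the expansion \eqref{kltoz} of $Z_{U_{m,d}}(t)$ and then substitute formula \eqref{ukl2} for the Kazhdan-Lusztig coefficients. Since each summand $t^{d-k}P_{U_{m,k}}(t)$ contributes to $t^i$ only through the coefficient $c_{m,k}^{\,i-d+k}$, the substitution $k=d-i+l$ turns the exponent $i-d+k$ into $l$ and gives
\begin{align*}
z_{m,d}^i=\sum_{l=0}^{i}\binom{d+m}{d-i+l+m}\,c_{m,\,d-i+l}^{\,l},\qquad 0\le i\le d-1.
\end{align*}
Plugging in \eqref{ukl2} (with $d\mapsto d-i+l$ and $i\mapsto l$, so that $d-h-i+m$ becomes $d-i-h+m$) produces a double sum over $l$ and $h$. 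I would then interchange the order of summation, pulling $\frac{(-1)^{h+1}h}{d-i-h+m}$ outside, so that the whole problem reduces to evaluating, for each fixed $h$, the inner sum
\begin{align*}
S_h=\sum_{l=0}^{i}\binom{d+m}{d-i+l+m}\binom{d-i+l+m}{l}\binom{d-i-h+m}{d-i-l-h}\binom{m+l}{m-h}.
\end{align*}

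The crux is to evaluate $S_h$ in closed form. My approach is to collapse the four binomials by repeated use of the subset-of-subset (trinomial revision) identity $\binom{n}{a}\binom{a}{b}=\binom{n}{b}\binom{n-b}{a-b}$. Applied to the first pair this gives $\binom{d+m}{d-i+l+m}\binom{d-i+l+m}{l}=\binom{d+m}{i}\binom{i}{l}$, and applied to the second pair (after the symmetry $\binom{d-i-h+m}{d-i-l-h}=\binom{d-i-h+m}{m+l}$) it gives $\binom{d-i-h+m}{d-i-l-h}\binom{m+l}{m-h}=\binom{d-i-h+m}{m-h}\binom{d-i}{l+h}$. This leaves only the single Chu-Vandermonde convolution $\sum_{l}\binom{i}{l}\binom{d-i}{l+h}=\binom{d}{i+h}$, whence
\begin{align*}
S_h=\binom{d+m}{i}\binom{d-i-h+m}{m-h}\binom{d}{i+h}.
\end{align*}
As a safeguard, this single-variable identity is also amenable to Zeilberger's algorithm, which the paper already relies on elsewhere.

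Finally I would verify that $\frac{(-1)^{h+1}h}{d-i-h+m}S_h$ coincides \emph{term by term} with the $h$-th summand on the right-hand side of the claimed formula; writing everything in factorials and using $\frac{1}{d-i-h+m}\binom{d-i-h+m}{m-h}=\frac{(d-i-h+m-1)!}{(m-h)!\,(d-i)!}$ together with $\binom{d+2m}{i+m}/\binom{d+2m}{m}=\frac{m!\,(d+m)!}{(i+m)!\,(d+m-i)!}$, both sides collapse to the same factorial expression, so the two sums agree even summand-wise (no second summation over $h$ is needed). The boundary value $z_{m,d}^d=1$ is immediate from \eqref{kltoz}, since $\deg\!\big(t^{d-k}P_{U_{m,k}}(t)\big)<d-k+k/2\le d-\tfrac12<d$ for every $k\ge1$, leaving only the leading term $t^d$. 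The main obstacle is the evaluation of $S_h$: spotting the precise sequence of trinomial-revision steps that telescopes the seemingly four-fold entangled sum down to a single Vandermonde convolution; everything before and after is routine bookkeeping.
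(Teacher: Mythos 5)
Your proposal is correct and takes essentially the same route as the paper's own proof: both read off $z_{m,d}^i$ from \eqref{kltoz}, substitute \eqref{ukl2}, interchange the two summations, collapse the four binomials with exactly the same two trinomial-revision identities, and finish with the same Chu--Vandermonde convolution, arriving at the identical intermediate formula $z_{m,d}^i=\binom{d+m}{i}\sum_{h=1}^{m}\frac{(-1)^{h+1}h}{m+d-i-h}\binom{m+d-i-h}{m-h}\binom{d}{i+h}$ before the final factorial rewriting. The only differences are cosmetic: the paper works with $z_{m,d}^{d-i}$ and replaces $i$ by $d-i$ at the end, whereas you index by $i$ directly (your summation range includes a few out-of-range terms, which harmlessly vanish on both sides of \eqref{ukl2}) and you verify the last rewriting term by term rather than asserting it.
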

\begin{proof}
By equating coefficients on both sides of \eqref{kltoz},
we find that $z_{m,d}^d=1$ and
\begin{align*}
 z_{m,d}^{d-i}=\sum_{k=i}^{2i-1}{\binom{d+m}{k+m}  c_{m,k}^{k-i}}=\sum_{k=0}^{i-1}{\binom{d+m}{k+i+m}  c_{m,k+i}^{k}},
\end{align*}
for each  $1 \leq i\leq d$.
Then substituting  \eqref{ukl2}  into the above identity  yields
\begin{align*}
z_{m,d}^{d-i}&=\sum_{k=0}^{i-1}\binom{d+m}{k+i+m} \binom{k+i+m}{k}\sum_{h=1}^{m}\frac{(-1)^{h+1}h}{m+i-h} \binom{m+i-h}{i-k-h}\binom{m+k}{m-h}.
\end{align*}

By interchanging the order of summation, we have
\begin{align*}
z_{m,d}^{d-i}&=\sum _{h=1}^{m} \frac{(-1)^{h+1}h}{m+i-h} \sum_{k=0}^{i-1}{ \binom{d+m}{k+i+m}\binom{k+i+m}{k} \binom{m+i-h}{i-k-h}\binom{m+k}{m-h}}.
\end{align*}
Note that
\begin{align*}
\binom{d+m}{k+i+m}\binom{k+i+m}{k}&=\binom{d+m}{i+m}\binom{d-i}{k},\\[5pt]
\binom{m+i-h}{i-k-h}\binom{m+k}{m-h}&=\binom{i}{i-h-k}\binom{m+i-h}{m-h}.
\end{align*}
Thus, we have
\begin{align*}
z_{m,d}^{d-i}
&=\binom{d+m}{i+m}\sum _{h=1}^{m}{\frac{(-1)^{h+1}h}{m+i-h} \binom{m+i-h}{m-h}
\sum_{k=0}^{i-1}\binom{i}{i-h-k} \binom{d-i}{k}  }\\
&=\binom{d+m}{i+m}\sum _{h=1}^{m}{\frac{(-1)^{h+1}h}{m+i-h} \binom{m+i-h}{m-h}
\sum_{k=0}^{i-h}\binom{i}{i-h-k} \binom{d-i}{k}}
\end{align*}
where the last equality holds since $\binom{i}{i-h-k}=0$ for $k>i-h$. By the Chu-Vandermonde identity, for each  $1 \leq i\leq d$ we have
\begin{align*}
z_{m,d}^{d-i}&=\binom{d+m}{i+m}\sum _{h=1}^{m}{\frac{(-1)^{h+1}h}{m+i-h} \binom{m+i-h}{m-h}
\binom{d}{i-h}}.
\end{align*}
Further replacing $i$ with $d-i$, we get that
\begin{align*}
z_{m,d}^i&=\binom{d+m}{i}\sum_{h=1}^{m}{\frac{(-1)^{h+1}h}{m+d-i-h} \binom{m+d-i-h}{m-h}\binom{d}{i+h}}\\
&=\frac{\binom{d+2 m}{i+m}\binom{d }{i}}{\binom{d+2 m}{m}}\sum_{h=1}^{m}\frac{(-1)^{h+1} h }{m} \binom{i+m}{m-h} \binom{d-i-h+m-1}{m-1},
\end{align*}
where $0\leq i \leq d-1$. This completes the proof. 
 \end{proof}

We proceed to prove Theorem \ref{thm-uz1}.

\begin{proof}[Proof of Theorem \ref{thm-uz1}]
Substituting $i=d$ into the right hand side of \eqref{uz1}, we get that
\begin{align*}
z_{m,d}^{d}&=\sum_{h=0}^{m-1}\frac{d (h-m+1)+m}{(h+1) m} \binom{d-1+h}{h}\\
&=\sum_{h=0}^{m-1}\left(\frac{d}{m}-\frac{d-1}{h+1} \right) \binom{d-1+h}{h}\\
&=\sum _{h=0}^{m-1} \left(\frac{d }{m}\binom{d+h-1}{h}-\binom{d+h-1}{h+1}\right)\\
&=\frac{d }{m}\sum_{h=0}^{m-1}\left(\binom{d+h}{h}-\binom{d+h-1}{h-1}\right)  -\sum _{h=0}^{m-1} \left(\binom{d+h}{h+1}-\binom{d+h-1}{h}\right)\\
&=\frac{d }{m} \binom{d+m-1}{m-1}-\left(\binom{d+m-1}{m}-1\right)\\
&=1.
\end{align*}
This shows that \eqref{uz1}
  holds for $i=d$ in view of Theorem \ref{uz2}.

Next we show that \eqref{uz1} holds for $0\leq i<d-1$.
By \eqref{uz1} and  \eqref{uz2}, it is sufficient to show that
$f_{m,d}^{i}=g_{m,d}^{i}$, where
\begin{align*}
f_{m,d}^{i}&=\sum_{h=1}^{m}\frac{(-1)^{h+1} h }{m} \binom{i+m}{m-h} \binom{d-i-h+m-1}{m-1},\\
g_{m,d}^{i}&=\sum _{h=0}^{m-1}  \frac{i (h-m+1)+m}{(h+1) m} \binom{i-1+h}{h} \binom{d-i+h}{h}.
\end{align*}
It is clear that $f_{1,d}^{i}=g_{1,d}^{i}=1$.
It remains to show that $f_{m,d}^{i}$ and $g_{m,d}^{i}$ have the same recurrence relation with respect to $m$.
We first use Zeilberger's algorithm to determine the recursion satisfied by $f_{m,d}^{i}$.

\begin{mma}
\In |Zb|\big[\frac{(-1)^{h+1} h }{m} |Binomial|[i+m,m-h] |Binomial|[d-i-h+m-1,m-1],\{h, 1, m\},m \break \big];\\
\end{mma}
\begin{mma}
\In |FunctionExpand|[\%]/.|Gamma|[n\_]\to(n-1)!\\
\end{mma}
\begin{mma}
\Out \big\{-m (d + d i - i^2 + d m)|SUM|[m]+ (1 + m) (d i - i^2 + d m)|SUM|[1+m]  =\frac{(i+m)! (d-i+m)!}{(i-1)! (m!)^2 (d-i-1)!}\big\} \\
\end{mma}

As indicated above, we have
\begin{align*}
(1+m) (d i-i^2+d m)f_{m+1,d}^{i}-m (d+d i-i^2+d m) f_{m,d}^{i}
 &=\frac{(i+m)! (d-i+m)!}{(i-1)! (m!)^2 (d-i-1)!}.
\end{align*}

Similarly, we apply Zeilberger's algorithm to $g_{m,d}^{i}$ as follows.

\begin{mma}
\In |Zb|[\frac{i (h-m+1)+m}{(h+1) m}   |Binomial|[i-1+h,h] |Binomial|[d-i+h,h] ,\{h,0,m-1\},m];\\
\end{mma}
\begin{mma}
\In |FunctionExpand|[\%]/.|Gamma|[n\_]\to(n-1)!\\
\end{mma}
\begin{mma}
\Out \big\{-m (d + d i - i^2 + d m)|SUM|[m]+ (1 + m) (d i - i^2 + d m)|SUM|[1+m]  =\frac{(i+m)! (d-i+m)!}{(i-1)! (m!)^2 (d-i-1)!}\big\} \\
\end{mma}

We find that $g_{m,d}^{i}$ satisfies the same recurrence relation as $f_{m,d}^{i}$, as desired. This completes the proof.
\end{proof}

\subsection{Real zeros}\label{sec:uzroot}

This subsection is devoted to the study of the real-rootedness of the $Z$-polynomials $Z_{U_{m,d}}(t)$. As will be shown below,  Theorem \ref{thm-uzroot} would follow in the same manner as Theorem \ref{thm-uklroot}.

By Theorem \ref{thm-uz1}, we see that
\begin{align}\label{eq-uz1-reform}
Z_{U_{m,d}}(t)&=\frac{1}{\binom{d+2 m}{m}}\sum_{i=0}^{d}\binom{d+2 m}{i+m}\binom{d }{i} b_m(d,i) t^i ,
\end{align}
where
\begin{align}\label{eq-zpoldi}
b_m(d,i)=\sum _{h=0}^{m-1}  \frac{i (h-m+1)+m}{(h+1) m} \binom{i-1+h}{h} \binom{d-i+h}{h}.
\end{align}
Note that $i$ can take any integer value between $0$ and $d$ in the above formula, namely, $b_{m}(d,i)$ is well defined for $0\leq i\leq d$. Moreover, we have $b_{m}(d,d)=1$ since $z_{m,d}^d=1$.

Parallel to Lemma \ref{lem-uklroots}, we need the  following result to prove Theorem \ref{thm-uzroot}. We omit the proof of the lemma here.

\begin{lem}\label{lem-uzroots}
For any  $m$ and $d$, the polynomial
\begin{align}
\frac{1}{\binom{d+2 m}{m}}\sum_{i=0}^{d} {
\binom{d+2 m}{i+m}\binom{d}{i}  t^i},
\end{align}
has only real zeros.
\end{lem}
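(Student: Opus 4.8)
The plan is to follow the proof of Lemma \ref{lem-uklroots} essentially verbatim, the only change being the underlying real-rooted polynomial to which the multiplier sequence is applied. Recall that, by Lemma \ref{ms-u}, for every fixed $m$ and $d$ the sequence $\{\binom{d+2m}{i+m}\}_{i=0}^{\infty}$ is a multiplier sequence. By the very definition of a multiplier sequence, if $f(t)=\sum_{i=0}^{n}a_it^i$ has only real zeros, then the polynomial $\sum_{i=0}^{n}\binom{d+2m}{i+m}a_it^i$ also has only real zeros.

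First I would take the base polynomial to be
$$f(t)=\sum_{i=0}^{d}\binom{d}{i}t^i=(1+t)^d,$$
which manifestly has only real zeros, namely $t=-1$ with multiplicity $d$. Applying the multiplier sequence $\{\binom{d+2m}{i+m}\}_{i=0}^{\infty}$ to $f(t)$ then produces exactly
$$\sum_{i=0}^{d}\binom{d+2m}{i+m}\binom{d}{i}t^i,$$
so this polynomial has only real zeros. Since multiplying by the positive constant $1/\binom{d+2m}{m}$ does not affect the location of the zeros, the polynomial in the statement is real-rooted, which is what we want.

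In contrast to Lemma \ref{lem-uklroots}, where one had to invoke the pure-imaginary-rootedness of the Fibonacci polynomials to handle the base polynomial $\sum_i\binom{d-i-1}{i}t^i$, here the base polynomial is simply $(1+t)^d$, so no external real-rootedness input is required. Consequently there is essentially no obstacle to overcome: the entire content of the lemma reduces to the multiplier-sequence property already established in Lemma \ref{ms-u}, applied to the trivially real-rooted polynomial $(1+t)^d$. This is precisely why the proof can be safely omitted, and the argument above makes explicit the one-line reduction.
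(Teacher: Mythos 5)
Your proof is correct and is exactly the argument the paper has in mind: the paper omits the proof of Lemma \ref{lem-uzroots} precisely because it is ``parallel to Lemma \ref{lem-uklroots}'', i.e., one applies the multiplier sequence $\{\binom{d+2m}{i+m}\}_{i=0}^{\infty}$ from Lemma \ref{ms-u} to the real-rooted base polynomial, which here is simply $(1+t)^d$ rather than the Fibonacci-type polynomial. Your observation that no external real-rootedness input is needed in this case is exactly why the authors felt safe omitting the proof.
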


Parallel to Conjecture \ref{conj-ukl}, we have the following conjecture.

\begin{conj}\label{conj-uz}
 For any $m,d$, and $0\leq i\leq d$, let $b_{m}(d,i)$ be defined as in \eqref{eq-zpoldi}. Then $\{b_{m}(d,i)\}_{i=0}^{d}$ is a $d$-sequence.
\end{conj}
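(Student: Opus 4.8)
The plan is to follow the strategy used to establish the analogous statement for the Kazhdan-Lusztig coefficients, namely the proof of Theorem \ref{thm-dsequence}, since $b_m(d,i)$ in \eqref{eq-zpoldi} shares the same structural features as $f_m(d,i)$ in \eqref{eq-klpoldi}. By the algebraic characterization of $n$-sequences in Theorem \ref{n-char}, to show that $\{b_m(d,i)\}_{i=0}^{d}$ is a $d$-sequence it suffices to prove that
\begin{align*}
Q_d(t)=\sum_{i=0}^{d} b_m(d,i)\binom{d}{i}t^i
\end{align*}
has only real zeros, all of the same sign. Since $b_m(d,i)>0$ for $0\leq i\leq d$, it is enough to establish the real-rootedness of $Q_d(t)$.

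First I would regard $b_m(d,i)$, for fixed $m$, as a polynomial in $i$ and expand it in the falling-factorial basis, writing $b_m(d,i)=\sum_{k=0}^{2(m-1)}\tilde g_{m,k}(d)(i)_k$ for suitable polynomials $\tilde g_{m,k}(d)$ in $d$; from \eqref{eq-zpoldi} one checks that the degree in $i$ equals $2(m-1)$. Using the identity $\sum_{i=0}^{d}(i)_k\binom{d}{i}t^i=(d)_k\,t^k(1+t)^{d-k}$, the same computation as in the proof of Theorem \ref{thm-dsequence} factors $Q_d(t)$ as $(1+t)^d$ times a polynomial in $t/(1+t)$, thereby reducing the real-rootedness of $Q_d(t)$ to that of
\begin{align*}
G_{m,d}(t)=\sum_{k=0}^{2(m-1)}\tilde g_{m,k}(d)\,(d)_k\,t^k .
\end{align*}
As in the Kazhdan-Lusztig case, the zeros are automatically of the same sign because $G_{m,d}(1)=b_m(d,d)=1\neq 0$.

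For each fixed $m$ with $2\leq m\leq 15$ I would then split into two cases. When $d<2(m-1)$ there are only finitely many polynomials $G_{m,d}(t)$, and each can be verified to be real-rooted directly with a computer algebra system. When $d\geq 2(m-1)$, the polynomial $G_{m,d}(t)$ has degree exactly $2(m-1)$, and I would apply the Borchardt-Hermite criterion (Theorem \ref{rz-criterion}): setting $d'=d-2(m-1)\geq 0$, it suffices to check that every Hurwitz determinant $\Delta_{2k}(G_{m,d}(t),G'_{m,d}(t))$, for $1\leq k\leq 2(m-1)$, is a polynomial in $d'$ with positive coefficients, which forces positivity for all $d'\geq 0$ and hence $2(m-1)$ distinct real zeros. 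Combined with Lemma \ref{lem-uzroots}, this establishes the $d$-sequence property for $2\leq m\leq 15$ and yields Theorem \ref{thm-uzroot}.

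The main obstacle is precisely what keeps Conjecture \ref{conj-uz} open in general: as $m$ grows, the degree $2(m-1)$ of $G_{m,d}(t)$ increases, so both the number and the size of the Hurwitz determinants grow, and verifying their coefficientwise positivity becomes computationally prohibitive (paralleling the long running times already reported for $m=14,15$ in the Kazhdan-Lusztig setting). The determinantal argument therefore settles only bounded $m$; a uniform, non-computational proof of the positivity of all Hurwitz determinants—or some alternative structural argument valid for every $m$—would be needed to resolve the full conjecture.
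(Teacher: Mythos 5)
Your proposal takes essentially the same route as the paper, which likewise does not prove Conjecture \ref{conj-uz} in general but only the bounded case $2\leq m\leq 15$ (Theorem \ref{thm-dsequence-z}): falling-factorial expansion of $b_m(d,i)$, reduction of $\sum_i b_m(d,i)\binom{d}{i}t^i$ via $\sum_{i}(i)_k\binom{d}{i}t^i=(d)_k t^k(1+t)^{d-k}$ to a degree-$2(m-1)$ polynomial (the paper's $Y_{m,d}(t)$, your $G_{m,d}(t)$), and then the Borchardt--Hermite criterion with coefficientwise positivity of the Hurwitz determinants in $d'=d-2(m-1)$, checked by computer. Your closing assessment—that this determinantal computation settles only bounded $m$ and leaves the full conjecture open—is exactly the paper's own position.
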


We can not prove this conjecture for any $m$. To prove Theorem \ref{thm-uzroot}, we only need to prove the following theorem.

\begin{thm}\label{thm-dsequence-z}
For any $d$ and $2\leq m\leq 15$, the sequence $\{b_{m}(d,i)\}_{i=0}^{d}$ is a $d$-sequence.
\end{thm}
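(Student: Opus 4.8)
The plan is to mirror exactly the strategy already executed for Theorem~\ref{thm-dsequence}, since the $Z$-polynomial case runs completely parallel to the \KL polynomial case. By Theorem~\ref{n-char}, it suffices to show that for each fixed $m$ with $2\leq m\leq 15$ the polynomial
\begin{align*}
Q_d(t)=\sum_{i=0}^{d} b_{m}(d,i) \binom{d}{i} t^i
\end{align*}
has only real zeros of the same sign (in fact nonnegative zeros). First I would expand $b_{m}(d,i)$, viewed as a polynomial in $i$ for fixed $m$, in terms of the falling factorials $(i)_k$. From \eqref{eq-zpoldi} one checks that as a polynomial in $i$ it has degree $2(m-1)$, so I can write $b_{m}(d,i)=\sum_{k=0}^{2(m-1)} \tilde g_{m,k}(d)\,(i)_k$ with each $\tilde g_{m,k}(d)$ a polynomial in $d$; the constant term and leading coefficient can be read off just as before.

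Next I would apply the same telescoping identity
\begin{align*}
\sum_{i=0}^{d}(i)_k\binom{d}{i}t^i=(d)_k\,t^k(1+t)^{d-k}
\end{align*}
to factor out $(1+t)^d$, reducing the real-rootedness of $Q_d(t)$ to that of the auxiliary polynomial
\begin{align*}
G_{m,d}(t)=\sum_{k=0}^{2(m-1)} \tilde g_{m,k}(d)\,(d)_k\,t^k,
\end{align*}
which has degree $2(m-1)$ whenever $d\geq 2(m-1)$. Note $G_{m,d}(1)=b_{m}(d,d)=1\neq 0$, so no spurious root at $t=-1$ is introduced. For the finitely many cases $d<2(m-1)$ I would verify real-rootedness directly by a computer algebra computation; for $d\geq 2(m-1)$ I would invoke the Borchardt--Hermite criterion (Theorem~\ref{rz-criterion}) and show that each Hurwitz determinant $\Delta_{2k}(G_{m,d}(t),G'_{m,d}(t))$, after the substitution $d'=d-2(m-1)\geq 0$, is a polynomial in $d'$ with strictly positive coefficients, which forces positivity and hence $2(m-1)$ distinct real zeros.

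The main obstacle is purely computational rather than conceptual: for each $m$ from $2$ through $15$ one must symbolically compute the $2(m-1)$ Hurwitz determinants and certify positivity of their coefficients as polynomials in $d'$. As in the \KL case this is feasible but increasingly expensive as $m$ grows, so the heavy cases ($m=14,15$) would be handled on a server and the verification scripts deposited at the same repository referenced for Theorem~\ref{thm-dsequence}. One small point to confirm along the way is that the same-sign conclusion of Theorem~\ref{n-char} is met, i.e.\ that the real zeros of $G_{m,d}(t)$ are all of one sign; this follows because the coefficients of $b_{m}(d,i)$ and the structure of $G_{m,d}$ force the zeros to be nonnegative, exactly paralleling the sign bookkeeping in the proof of Theorem~\ref{thm-dsequence}. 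Once $\{b_{m}(d,i)\}_{i=0}^{d}$ is shown to be a $d$-sequence, Theorem~\ref{thm-uzroot} follows immediately from \eqref{eq-uz1-reform} together with Lemma~\ref{lem-uzroots}.
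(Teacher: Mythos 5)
Your proposal is correct and follows essentially the same route as the paper's own proof, which likewise mirrors Theorem~\ref{thm-dsequence} verbatim: expand $b_{m}(d,i)$ in falling factorials, factor out $(1+t)^d$ to reduce to the auxiliary polynomial (called $Y_{m,d}(t)$ with coefficients $y_{m,k}(d)$ in the paper), handle $d<2(m-1)$ by direct computation, and for $d\geq 2(m-1)$ apply the Borchardt--Hermite criterion by certifying that each Hurwitz determinant is a polynomial in $d'=d-2(m-1)$ with positive coefficients, with the heavy cases checked by computer. The only cosmetic differences are notational (your $\tilde g_{m,k}$, $G_{m,d}$ versus the paper's $y_{m,k}$, $Y_{m,d}$) and your explicit remark that $Y_{m,d}(1)=b_m(d,d)=1\neq 0$, which the paper records separately via $z_{m,d}^d=1$.
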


\begin{proof}
The proof is similar to that of Theorem \ref{thm-dsequence}.
In fact, $f_{m}(d,i)$ and $b_{m}(d,i)$ share some common properties. Note that $b_{m}(d,i)$ can also be considered as a polynomial in $i$ and $d$ for any fixed $m$, and
as a polynomial in $i$ the polynomial $b_{m}(d,i)$ is of degree
$2(m-1)$ with leading coefficients $\frac{(-1)^{m-1}}{(m-1)! m!}$ and constant term $1$. Thus, we can express $b_{m}(d,i)$ in terms of the falling factorials as done for $f_{m}(d,i)$:
\begin{align}\label{eq-fg-z}
b_{m}(d,i)=\sum_{k=0}^{2(m-1)} y_{m,k}(d) (i)_k.
\end{align}

To prove the theorem, it suffices to show that
\begin{align}\label{eq-realroots-z}
R_d(t)=\sum_{i=0}^{d} b_{m}(d,i) \binom{d}{i} t^i
\end{align}
has only nonnegative zeros for $2\leq m\leq 15$.
Recall that in the proof of Theorem \ref{thm-dsequence} we transform the problem of proving the real-rootedness of
$Q_d(t)$ into the problem of proving the real-rootedness of
$G_{m,d}(t)$ defined by \eqref{eq-gdt}. In the same manner, we are able to reduce the problem of
proving the real-rootedness of
$R_d(t)$ into the problem of proving the real-rootedness of
the polynomial
\begin{align}\label{eq-gdt-z}
Y_{m,d}(t)=\sum_{k=0}^{2(m-1)} y_{m,k}(d) (d)_k t^k,
\end{align}
where $y_{m,k}(d)$ is defined by \eqref{eq-fg-z}.

The real-rootedness of $Y_{m,d}(t)$ can be proved along the lines of proving the real-rootedness of $G_{m,d}(t)$.
We still take $m=2$ to illustrate our proof.
We first expand $b_{m}(d,i)$ in terms of the falling factorials, and then determine the explicit expression of $Y_{m,d}(t)$.
For $m=2$ we have
\begin{align*}
b_{m}(d,i)&=
1+\frac{di}{2}-\frac{i^2}{2}=(i)_0+\frac{d-1}{2}(i)_1-\frac{1}{2}(i)_2\\
Y_{m,d}(t)&=1+\frac{d-1}{2} (d)_1t-\frac{1}{2}(d)_2t^2=1+\frac{d(d-1)}{2}t-\frac{d(d-1)}{2}t^2.
\end{align*}
Now, to prove the real-rootedness of $Y_{m,d}(t)$, there are two cases to consider: (i) $d<2(m-1)$; (ii) $d\geq 2(m-1)$.

Let us first consider the case of $d<2(m-1)$.
For $m=2$, such $d$ can only be $1$. In this case, we have
$Y_{2,1}(t)=1$.

We proceed to consider the case of $d\geq 2(m-1)$.
In this case, $Y_{m,d}(t)$ is obviously  a polynomial in $t$ of degree $2(m-1)$.
As before, by Theorem \ref{rz-criterion}, we need to prove the positivity of $2(m-1)$ Hurwitz determinants $\Delta_{2k}(Y_{m,d}(t),Y'_{m,d}(t))$ for $1\leq k\leq 2(m-1)$.
We find that the substitution of $d'=d-2(m-1)\geq 0$ is still helpful. In fact, we can show that for any $1\leq k\leq 2(m-1)$  the Hurwitz determinant $\Delta_{2k}(Y_{m,d}(t),Y'_{m,d}(t))$ is a polynomial in $d'$ with positive coefficients when $m$ is not too big. For $m=2$, it is straightforward to compute that
\begin{align*}
\Delta_{2}(Y_{2,d}(t),Y'_{2,d}(t))&=\det\left(
\begin{array}{cc}
 -\frac{1}{2} (d-1) d & \frac{1}{2} d (d-1) \\
 0 & -(d-1) d \\
\end{array}
\right)\\
&=\frac{1}{2} (d-1)^2 d^2\\
&=\frac{d'^4}{2}+3 d'^3+\frac{13 d'^2}{2}+6 d'+2,
\end{align*}
and
\begin{align*}\Delta_{4}(Y_{2,d}(t),Y'_{2,d}(t))&=\det
\left(
\begin{array}{cccc}
 -\frac{1}{2} (d-1) d & \frac{1}{2} (d-1) d & 1 & 0 \\
 0 & -(d-1) d & \frac{1}{2} (d-1) d & 0 \\
 0 & -\frac{1}{2} (d-1) d & \frac{1}{2} (d-1) d & 1 \\
 0 & 0 & -(d-1) d & \frac{1}{2} (d-1) d \\
\end{array}
\right)\\
&=\frac{1}{16} (d-1)^2 d^2 \left(d^4-2 d^3+9 d^2-8 d\right)\\
&=\frac{d'^8}{16}+\frac{3 d'^7}{4}+\frac{35 d'^6}{8}+\frac{63 d'^5}{4}+\frac{585 d'^4}{16}+54 d'^3+\frac{97 d'^2}{2}+24 d'+5,
\end{align*}
where $d'=d-2\geq 0$.
 For $3\leq m \leq 15$, a similar calculation can be found in \linebreak
\href{https://github.com/mathxie/kl_uniform_matroid}{https://github.com/mathxie/kl\_uniform\_matroid}.
This completes the proof. 
\end{proof}

Finally, we are in the position to prove Theorem \ref{thm-uzroot}.

\begin{proof}[Proof of Theorem \ref{thm-uzroot}.]
It is clear that $Z_{U_{m,d}}(t)$ is a polynomial with positive coefficients, see \eqref{kltoz}.
Combining Theorem \ref{thm-dsequence-z} and Lemma \ref{lem-uzroots} we obtain the desired result.
\end{proof}


 \section*{Acknowledgments}
The first author is supported in part by the Fundamental Research Funds for the Central Universities 3102017OQD101. The second author is supported in part by the National Science Foundation of USA
 grant DMS-1600811.
 This fourth author is supported in part by the National Science Foundation of China (Nos. 11231004, 11522110). 
The fifth author is supported in part by the National Science Foundation of China (Nos. 11626172, 11701424).



\bibliographystyle{abbrv}
\end{document}